\documentclass[12pt]{article}
\makeindex
\usepackage{amsfonts,amssymb,amsmath,amscd,amsthm,xcolor,mathtools,tikz-cd,enumerate}
\usepackage[all,cmtip]{xy}
\usepackage[backref=false,colorlinks, linkcolor=blue, citecolor=red]{hyperref}
\usepackage[top=1in, bottom=1.25in, left=1.25in, right=1.25in]{geometry}
\makeatletter

\newtheorem{theorem}{Theorem}[section]
\newtheorem{proposition}[theorem]{Proposition}
\newtheorem{corollary}[theorem]{Corollary}

\newtheorem{lemma}[theorem]{Lemma}
\newtheorem{remark}[theorem]{Remark}
\newtheorem{example}[theorem]{Example} 
\newtheorem{conjecture}[theorem]{Conjecture}

\numberwithin{equation}{section}

\def\Ind{{\rm ind\,}}
\def\Gal{{\rm Gal}}

\title{Totally positive field extensions and the pythagorean index}
\author{P. Mandal, R. Preeti, A. Soman }
\date{}
\begin{document}
	
	\maketitle 
	
	\begin{abstract}
		For a formally real field $F$, we study totally positive field extensions $K$ over $F$. We show that, if $K /F$ is Galois and totally positive then 
		so is the corresponding extension of their pythagorean closures $K_{\rm py}$ over $F_{\rm py}$. We also study the behaviour of weak isotropy and weak hyperbolicity of central simple algebras with an orthogonal involution  over totally positive field extensions. We use some of these results to 
		prove new cases for which a conjecture due to Becher holds. 
	\end{abstract}
	
	\section{Introduction}
	
	Let $F$ be a formally real field. 
	Algebraic structures on such fields are usually studied in relation to the orderings and semiorderings on $F$. 
	A field extension $K/F$ is said to be {\it totally positive} if every semiordering of $F$ extends to a semiordering of $K$ (see \cite[\S3]{Becher}).  
	All the algebraic field extensions of $F$ considered in this paper will be assumed  to be subfields of a fixed 
	algebraic closure $F_{\rm al}$ of $F$. 
	A field $F$ is said to be {\it pythagorean} if a sum of squares in 
	$F$ is a square \cite[Chapter 8, \S$4$]{Lam}.  
	Recall that there exists a smallest pythagorean subfield $F_{\rm py}$ of $F_{\rm al}$ containing $F$, called the {\it pythagorean closure} of $F$ (see \S\ref{prelim}). We start by proving the following property of {\it Galois} totally positive field extensions (see \S\ref{tp extension}).
	\medskip

	\begin{theorem}\label{arbitrary Galois tp} 
		Let $K/F$ be a totally positive Galois extension of formally real fields. Then $K_{\rm py}/F_{\rm py}$ is a totally positive extension.
	\end{theorem}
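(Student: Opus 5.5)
The plan is to build $F_{\rm py}$ from $F$ by adjoining square roots of sums of squares, and to carry total positivity along this tower. Recall (\S\ref{prelim}) that $F_{\rm py}$ is the union of a directed family of fields, each obtained from $F$ by a finite tower $F=E_0\subset E_1\subset\cdots\subset E_n$ with $E_{i+1}=E_i(\sqrt{c_i})$ and $c_i$ a sum of squares in $E_i$. Similarly, $K_{\rm py}$ is the pythagorean closure of the compositum $KF_{\rm py}$, since $KF_{\rm py}\subseteq K_{\rm py}$. I would therefore split the claim into two statements: \emph{(a)} $K_{\rm py}/KF_{\rm py}$ is totally positive, and \emph{(b)} $KF_{\rm py}/F_{\rm py}$ is totally positive. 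Composing extensions of semiorderings then gives the theorem.

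For (a), the basic input is the quadratic case: a semiordering $T$ of a field $L$ extends to $L(\sqrt c)$ whenever $c\in T$. Take $\sqrt c$ to be ``positive'' and use the standard construction of semiorderings on $L(\sqrt c)$; this is the classical criterion for totally positive quadratic extensions, as in \cite[\S3]{Becher}. A sum of squares $c\neq 0$ lies in every semiordering. Hence every step $E\subset E(\sqrt c)$ of a pythagorean tower is totally positive, and so is every finite tower. To pass to the infinite union, I would use a compactness argument. The extensions of a fixed semiordering to the finite layers form nonempty compact sets in the Harrison-type topology, and restriction maps between them are continuous. The inverse limit is therefore nonempty, and a compatible family defines a semiordering of the union. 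This gives (a) and also shows that $F_{\rm py}/F$ is totally positive.

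For (b), by the same compactness reduction it suffices to treat a single step. Let $E$ be an intermediate field with $F\subseteq E\subseteq F_{\rm py}$ such that $KE/E$ is totally positive and Galois, and let $c$ be a sum of squares in $E$. I must show that $KE(\sqrt c)/E(\sqrt c)$ is totally positive. Let $T$ be a semiordering of $E(\sqrt c)$ and set $T_0=T\cap E$. By hypothesis $T_0$ extends to a semiordering $T_1$ of $KE$. Since $c$ is a sum of squares, it is positive for $T_1$, so $T_1$ extends to some $T_2$ on $KE(\sqrt c)$. The restriction $T_2\cap E(\sqrt c)$ extends $T_0$. There are at most two such extensions, distinguished by the sign of $\sqrt c$. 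If $\sqrt c\notin KE$, I would fix the sign by applying the nontrivial element of $\Gal(KE(\sqrt c)/KE)$ to $T_2$, which yields an extension of $T$. The inductive hypothesis also survives the step, since $KE(\sqrt c)/E(\sqrt c)$ is again Galois.

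The main obstacle is the case $\sqrt c\in KE$. Here the sign of $\sqrt c$ is already decided by $T_1$, and I need the freedom to choose $T_1$ among the extensions of $T_0$. I would first try to prove that $\Gal(KE/E)$ acts transitively on the set of semiorderings of $KE$ extending $T_0$, mimicking the argument for orderings via conjugacy of real closures. If transitivity is hard to establish for semiorderings, the fallback needs less. Some $\sigma\in\Gal(KE/E)$ sends $\sqrt c$ to $-\sqrt c$, and $\sigma(T_1)$ is again a semiordering of $KE$ extending $T_0$. It has the opposite sign on $\sqrt c$, so one of $T_1$ and $\sigma(T_1)$ restricts to $T$. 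This fallback uses the Galois hypothesis exactly once and avoids the transitivity question altogether. I therefore expect it to carry the argument.
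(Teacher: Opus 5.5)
Your part (a) is fine. It matches the paper, which applies Lemma \ref{py totally positive} to $KF_{\rm py}$ using $(KF_{\rm py})_{\rm py}=K_{\rm py}$. The gap is in (b), at the claim that $T_0$ has at most two extensions to $E(\sqrt c)$, distinguished by the sign of $\sqrt c$. This holds for orderings, by multiplicativity or via embeddings into a real closure. It is false for semiorderings, and both of your single-step arguments rest on it: the twist by $\Gal(KE(\sqrt c)/KE)$, and the fallback with $\sigma\in\Gal(KE/E)$.

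Here is a counterexample. Take $E=\mathbb{Q}((t))$ and $c=2$, so $E(\sqrt2)=\mathbb{Q}(\sqrt2)((t))$. Let $\iota_+,\iota_-$ be the orderings of $\mathbb{Q}(\sqrt2)$ with $\sqrt2>0$, resp.\ $\sqrt2<0$, and let $\tau$ be the nontrivial automorphism of $E(\sqrt2)/E$. Call a nonzero series positive if its leading coefficient is $\iota_+$-positive when its $t$-adic valuation is even, and $\iota_-$-positive when it is odd. Checking the axioms on leading terms shows this defines a semiordering $S$. It restricts to the ordering $t>0$ of $E$ and contains $\sqrt2$. The ordering $P$ defined by $\iota_+$-positivity of the leading coefficient does the same, yet $\sqrt2\,t\in P\setminus S$. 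So the ordering $t>0$ of $E$ has four extensions $P,\tau P,S,\tau S$ to $E(\sqrt2)$, two for each sign of $\sqrt2$. A group of order $2$ cannot act transitively on these, so your transitivity hope fails too (take $K=F(\sqrt2)$, $E=F$). With arbitrary $T_1,T_2$ you only reach $T_2\cap E(\sqrt c)$ and its conjugate, and nothing forces $T$ to be one of them. Arranging that is the entire content of the step.

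The missing idea is to avoid following individual semiorderings through quadratic steps. The paper uses \cite[Theorem 3.9]{BLS}: a finite Galois extension is totally positive once every \emph{ordering} of the base extends, i.e.\ once it lies in every real closure. A real closure of $F_{\rm py}$ at an ordering is a real closure of $F$. So this criterion transfers directly from a finite Galois $\widetilde M/F$ inside $K$ to $\widetilde MF_{\rm py}/F_{\rm py}$ (Proposition \ref{finite Galois tp}). Finite subextensions of $KF_{\rm py}/F_{\rm py}$ are then handled via Galois closures and Lemma \ref{2extension}, and the general case via \cite[Corollary 3.2]{BLS}. No induction over the pythagorean tower is needed.

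A smaller inaccuracy: your ``basic input'' that $T$ extends to $L(\sqrt c)$ whenever $c\in T$ is also false for semiorderings, since any extension forces $cT\subseteq T$. This is harmless, because you only use it for $c\in\sum L^2$, which Proposition \ref{equivalent conditions for quadratic} covers.
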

	\medskip
	
	Though there exist well-known criteria for extending orderings over fields, such results are not known for semiorderings. This makes it difficult to detect if a 
	given field extension $K/F$ is totally positive. In \S\ref{examples}, we provide examples of totally positive field extensions.
	\medskip

	Let $(A,\sigma)$ be a central simple algebra with an orthogonal involution over $F$. In \S\ref{involutions} we study the behavior of $(A,\sigma)$ over totally positive field extensions. In particular, we prove the following.
	
	\begin{theorem}\label{weak hyp involution}
		Let $F$ be a formally real field and let $K$ be a totally positive field extension over $F$. Let $(A,\sigma)$ be a central simple algebra with an orthogonal involution over $F$. If $(A, \sigma)$ is weakly hyperbolic over $K$ then $(A,\sigma)$ is weakly hyperbolic over $F$.
	\end{theorem}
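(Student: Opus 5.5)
We reduce the statement to signatures and use the characterization of weak hyperbolicity in terms of orderings and semiorderings. Recall that $(A,\sigma)$ is \emph{weakly hyperbolic} if some multiple $n\times(A,\sigma)$, i.e.\ $(M_n(A),\sigma\otimes t)$ with $t$ the transpose involution, is hyperbolic. By the involution version of Pfister's local–global principle (Lewis–Unger, Scharlau), this holds if and only if the signature $\mathrm{sign}_P(A,\sigma)$ vanishes for every ordering $P$ of $F$. Since totally positive extensions are defined through semiorderings, the cleanest route goes through Becher's semiordering framework. We pass to the adjoint quadratic form, or equivalently to a hermitian form $h$ over a division algebra $D$ Brauer equivalent to $A$, with $\sigma = \mathrm{ad}_h$. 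Weak hyperbolicity of $(A,\sigma)$ is then weak hyperbolicity of $h$. If $A$ is split, this reduces to a quadratic form $q$ over $F$ being weakly hyperbolic, i.e.\ torsion in the Witt ring.

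The key steps are as follows.

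\textbf{Step 1.} Work with orderings. Every ordering is a semiordering, so any ordering $P$ of $F$ extends to a semiordering $S$ of $K$, but not necessarily to an ordering of $K$. The signature argument therefore has to use semiorderings. Becher's result, which I will assume is available in \S\ref{tp extension}, says that a quadratic form $q$ over $F$ is weakly hyperbolic if and only if, for every semiordering $S$ of $F$, $q$ has as many entries in $S$ as in $-S$. Equivalently, $q$ is weakly isotropic if and only if it is indefinite at every semiordering. Taking this characterization, we define for each semiordering $S$ the count $\mathrm{sign}_S(q)$, which is well defined on Witt classes. Weak hyperbolicity is then the vanishing of all $\mathrm{sign}_S$.

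\textbf{Step 2.} Treat the split case. Suppose $q_K$ is weakly hyperbolic and $S$ is a semiordering of $F$. Extend $S$ to a semiordering $T$ of $K$ with $T\cap F=S$. For a diagonalization $q=\langle a_1,\dots,a_n\rangle$ we have $a_i\in S \iff a_i\in T$, so $\mathrm{sign}_S(q)=\mathrm{sign}_T(q_K)=0$. Hence $q$ is weakly hyperbolic over $F$.

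\textbf{Step 3.} Treat the general case. If $A$ is not split at an ordering, i.e.\ $A\otimes F_P$ is nonsplit, the signature is $0$ automatically. To avoid case analysis, I would instead use a quadratic form attached to $(A,\sigma)$: the trace form $T_\sigma(x)=\mathrm{Trd}_A(\sigma(x)x)$. It is known that $\mathrm{sign}_P(T_\sigma)=\mathrm{sign}_P(A,\sigma)^2$. Moreover, $(A,\sigma)$ is weakly hyperbolic if and only if $T_\sigma$ is weakly hyperbolic, because both conditions are equivalent to the vanishing of all signatures at orderings. Trace forms commute with scalar extension, $(T_\sigma)_K=T_{\sigma_K}$. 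So if $(A,\sigma)_K$ is weakly hyperbolic, then $T_{\sigma_K}$ is weakly hyperbolic over $K$. Step 2 then shows $T_\sigma$ is weakly hyperbolic over $F$, hence so is $(A,\sigma)$.

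The main obstacle is Step 1. We need the precise semiordering characterization of torsion (weakly hyperbolic) forms, namely the Pfister–Prestel-type statement that $q$ is torsion if and only if it is balanced at every semiordering. One direction is easy: balanced at orderings implies torsion by Pfister. The direction from $K$ to $F$ only requires "torsion over $K$ $\Rightarrow$ balanced at every semiordering of $K$", and this is the direction I must check. My first attempt would be the following. If $mq_K$ is hyperbolic, then $mq_K\simeq m'\langle 1,-1\rangle$. Counting entries in $T$ via Witt's cancellation-type invariance of the semiordering count (Prestel's theorem that $\mathrm{sign}_T$ is a Witt-ring invariant) gives $m\cdot\mathrm{sign}_T(q_K)=0$, hence $\mathrm{sign}_T(q_K)=0$. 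If Prestel's invariance is not stated earlier, this is the point to cite \cite{Becher}.
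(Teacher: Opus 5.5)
Your argument is correct, but it takes a different route from the paper. The paper works only with orderings. From $(A,\sigma)_K$ weakly hyperbolic it gets ${\rm sign}_P\,\sigma_K=0$ for all $P\in X_K$. It extends each $\widetilde P\in X_F$ to an ordering $P\in X_K$ and uses $F_{\widetilde P}\subset K_P$ to obtain ${\rm sign}_{\widetilde P}\,\sigma={\rm sign}_P\,\sigma_K=0$. It then concludes by the Lewis--Unger local--global principle, with no trace-form reduction and no semiordering signatures.

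You instead pass to the quadratic form $T_\sigma$ and extend orderings of $F$ only to \emph{semiorderings} $T$ of $K$. This requires an extra fact: ${\rm sign}_T$ is an isometry invariant, so that torsion of $(T_\sigma)_K$ forces ${\rm sign}_T=0$. That fact is true. A binary form $\langle a,b\rangle$ with $a,b\in T$ represents only elements of $T$, while a mixed one represents elements of both $T$ and $-T$; Witt's chain equivalence theorem then gives the general case. So your Step 2 is sound, and it shows directly that torsion quadratic forms descend along totally positive extensions. Your route uses only the literal definition of total positivity, at the cost of this invariance lemma and the detour through $T_\sigma$. The paper's route is shorter but relies on orderings extending to orderings, which it does not justify.

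Two of your side claims are wrong, though neither breaks the proof. First, you say an ordering of $F$ need ``not necessarily'' extend to an ordering of $K$; for totally positive $K/F$ it always does. If $T$ is a semiordering of $K$ with $T\cap F=P$, then $\sum PK^2\subseteq T$, so $-1\notin\sum PK^2$. This proper preordering therefore lies in an ordering of $K$, which extends $P$. That is exactly the step the paper uses, and it makes your semiordering detour unnecessary. Second, the sentence ``Equivalently, $q$ is weakly isotropic iff indefinite at every semiordering'' conflates Prestel's weak-isotropy theorem with the torsion criterion. These are different statements, and you only ever use the torsion one.
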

	
	\begin{theorem}\label{anologue-for-involution}
		Let $F$ be a formally real field and let $K$ be a totally positive field extension over $F$. Let $(A, \sigma)$ be a totally decomposable central simple algebra with an orthogonal involution over $F$. If $(A, \sigma)$ is weakly isotropic over $K$ then $(A,\sigma)$ is weakly hyperbolic over $F$.
	\end{theorem}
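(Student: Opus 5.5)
We would prove Theorem~\ref{anologue-for-involution} by passing to the pythagorean closure, where the question becomes one about quadratic forms. We combine this with the known fact that weak isotropy and weak hyperbolicity of an involution can be detected after extending scalars to $F_{\rm py}$.

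The first ingredient is the characterisation of weak isotropy and weak hyperbolicity for involutions. Recall that $(A,\sigma)$ is weakly hyperbolic if and only if some multiple $n\times(A,\sigma)$ is hyperbolic. It is weakly isotropic if and only if there is a nonzero $x_1,\dots,x_n\in A$ with $\sum\sigma(x_i)x_i=0$. Following Becher--Unger and Tignol, both properties hold over $F$ if and only if they hold over $F_{\rm py}$. Moreover, both properties can be read off from signatures: $(A,\sigma)$ is weakly hyperbolic exactly when its signature is zero at every ordering of $F$, and weakly isotropic exactly when it is not anisotropic at all orderings in a suitable ``positive definite'' sense.

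Assume $(A,\sigma)$ is weakly isotropic over $K$. The plan has four steps.

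\begin{enumerate}[(1)]
\item \textbf{Transfer weak isotropy to $F_{\rm py}$.} Write $(A,\sigma)\simeq\bigotimes_i(Q_i,\sigma_i)$ as a product of quaternion algebras with orthogonal involutions. Over $F_{\rm py}$ we want $A$ to become split. This would hold because $F_{\rm py}$ has cohomological behaviour such that the relevant Pfister invariant governs everything. More robustly, we use the Pfister form $\pi=\langle\!\langle a_1,\dots,a_m\rangle\!\rangle$ associated to $(A,\sigma)$ (Becher's Pfister factor, or via the ``discriminant'' quaternions). The key fact is that $(A,\sigma)$ is isotropic or hyperbolic over an extension $L$ exactly when the associated Pfister-type invariant is; over a pythagorean or split situation, $\mathrm{Ad}_\pi$ is attained.
\item \textbf{Reduce to a statement about forms over $K$.} Weak isotropy of $(A,\sigma)$ over $K$ gives weak isotropy of $\pi_K$: some multiple $n\times\pi_K$ is isotropic. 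Equivalently, $\pi$ is indefinite at every semiordering of $K$, hence at every ordering of $K$.
\item \textbf{Pull back along $K/F$.} Since $K/F$ is totally positive, every semiordering $S$ of $F$ extends to a semiordering $T$ of $K$. If $\pi$ were $S$-definite over $F$, then $\pi$ would be $T$-definite over $K$, because the entries $a_i\in F$ keep their signs. This contradicts step (2). So $\pi$ is weakly isotropic over $F$.
\item \textbf{Conclude.} A Pfister form that is weakly isotropic is weakly hyperbolic, since an isotropic Pfister form is hyperbolic. The same holds for the multiple $2^k\times\pi$, which is a Pfister form. Hence $(A,\sigma)$ is weakly hyperbolic over $F$, using step (1) in the reverse direction.
\end{enumerate}

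The main obstacle is step (1): producing a Pfister form that controls both weak isotropy and weak hyperbolicity of a totally decomposable $(A,\sigma)$ over an arbitrary extension, including the non-split case. We would first try to avoid splitting altogether and argue directly with signatures of involutions. By the Pfister factor conjecture for totally decomposable involutions, which is known in these signature and weak settings by work of Becher and Tignol, the signature of $(A,\sigma)$ at an ordering is either $0$ or $\pm\deg A$. If $\sigma$ were not weakly hyperbolic over $F$, some ordering $P$ of $F$ would give $|\mathrm{sign}_P\sigma|=\deg A$, which means $\sigma$ is $P$-definite. Extending $P$, viewed as a semiordering, to a semiordering of $K$ would preserve definiteness. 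Here one needs the semiordering version of definiteness, which is stable under extension, and this contradicts weak isotropy over $K$. This alternative makes step (3) the heart of the argument and localises the difficulty in the semiordering-definiteness compatibility.
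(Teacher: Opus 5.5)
There is a genuine gap: the object on which your whole argument turns is never constructed.

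\textbf{Step (1).} You ask for a Pfister form $\pi$ over $F$ such that $(A,\sigma)$ is isotropic, resp.\ hyperbolic, over an extension exactly when $\pi$ is. For non-split $A$ there is no adjoint Pfister form. The hope that $A$ splits over $F_{\rm py}$ is false: $A=\big(\frac{-1,-1}{\mathbb{Q}}\big)$ remains a division algebra over the formally real field $\mathbb{Q}_{\rm py}$. Yet every orthogonal involution on it has signature $0$ and so is weakly hyperbolic, which puts it squarely within the theorem (take $K=F$).

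\textbf{The alternative.} It has the same hole, as you note yourself. ``Definiteness of $\sigma$ at a semiordering'' is never defined, and you do not explain why weak isotropy of $(A,\sigma)_K$ rules it out. The side remark that weak isotropy can be read off from orderings is also wrong in general: even for quadratic forms one needs semiorderings unless $F$ has SAP.

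\textbf{The missing idea} is the involution trace form $T_\sigma(x)=\mathrm{Trd}_A(\sigma(x)x)$. It is a quadratic form over $F$ whether or not $A$ splits, and it gives exactly the two one-way implications you need. An ``iff'' is neither available nor necessary. For $\mathbb{H}$ over $\mathbb{R}$, take $\sigma$ to be conjugation by $i$ composed with the canonical involution. Then $T_\sigma=\langle 2,2,-2,-2\rangle$ is hyperbolic, while $\sigma$ is anisotropic.
\begin{enumerate}
\item If $\sum_i\sigma(x_i)x_i=0$ with nonzero $x_i\in A_K$, then $\sum_i T_\sigma(x_i)=\mathrm{Trd}\big(\sum_i\sigma(x_i)x_i\big)=0$. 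Hence $(T_\sigma)_K$ is weakly isotropic.
\item Since ${\rm sign}_P\,\sigma=\sqrt{{\rm sign}_P T_\sigma}$, if $T_\sigma$ is torsion then all signatures of $\sigma$ vanish, and \cite[Theorem 3.2]{ul} gives weak hyperbolicity.
\end{enumerate}
For totally decomposable $(A,\sigma)$, $T_\sigma=T_{\sigma_1}\otimes\cdots\otimes T_{\sigma_r}$ is similar to a Pfister form. This is also the real reason for your dichotomy ${\rm sign}_P\,\sigma\in\{0,\pm\deg A\}$; no Pfister factor conjecture is involved.

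\textbf{With $\pi:=T_\sigma$, your steps (2)--(4) go through.} Total positivity makes $T_\sigma$ weakly isotropic over $F$. So $2^k\cdot T_\sigma$ is, for large $k$, an isotropic form similar to a Pfister form, hence hyperbolic, and Lewis--Unger concludes. This slightly shortens the paper's argument. The paper shows ${\rm sign}_P\,\sigma=0$ for all $P\in X_K$, deduces weak hyperbolicity over $K$, and only then descends to $F$ via Theorem~\ref{weak hyp involution}.

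Your alternative also becomes correct once ``definite at a semiordering'' means that $T_\sigma$ is definite. Indeed, $|{\rm sign}_P\,\sigma|=\deg A$ forces ${\rm sign}_P T_\sigma=\dim T_\sigma$. So $T_\sigma$ stays definite at any semiordering of $K$ extending $P$, which contradicts Prestel's theorem for the weakly isotropic $(T_\sigma)_K$.
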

	\medskip
	
	In the final section, \S\ref{Conjecture-cases}, 
	we use the results from \S\ref{tp extension} of this paper to prove that the following conjecture due to Becher holds for some more cases. This conjecture suggests sufficient conditions on the field extension $K / F$ so  that the pythagorean index remains constant over that extension for the $2$ torsion elements in $Br(F)$. Recall that for a central simple algebra $A$ over $F$, the pythagorean index of $A$, $pind(A)\coloneqq\Ind(A\otimes_FF_{\rm py})$.
	\medskip

	\begin{conjecture}[Becher]\label{conjecture}
		Let $K/F$ be a totally positive field extension. For any central simple algebra $A$ of exponent $2$ over $F$, $pind(A)=pind(A\otimes_FK)$.
	\end{conjecture} 
	
	This conjecture is known to hold (see \cite[Proposition 4.2 and Theorem 4.3]{Becher}) in the following cases: 
	\medskip 
	
	\noindent
	{\bf (i)} $K = F( {\sqrt d} )$, where $d$ is a sum of squares in $F$.\\
	{\bf (ii)} $K=F(X)$, the rational function field in one variable over $F$.\\
	{\bf (iii)}  $K=F((X))$, the field of power series in one variable over $F$.\\
	{\bf (iv)}	 $K$ is the function field of a weakly isotropic quadratic form over $F$.
	\medskip

	In this paper we prove the following.

	\begin{theorem}\label{proof-conjecture}
		Let $F$ be a formally real field and let $K$ be a totally positive field extension over $F$. Let $A$ be a central simple algebra over $F$ of exponent $2$. We prove Becher's conjecture (Conjecture \ref{conjecture}) in the following cases:
		\begin{enumerate}[(i)]
			\item $pind(A)\leq 2$.
			\item $\Ind(A)\leq 4$.
			\item $K/F$ is a Galois extension and $pind(A)\leq 4$.
		\end{enumerate}
	\end{theorem}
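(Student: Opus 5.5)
Throughout, the inequality $pind(A\otimes_F K)\le pind(A)$ is automatic. Indeed, $K_{\rm py}$ contains $F_{\rm py}$, since $F\subseteq K_{\rm py}$ and $K_{\rm py}$ is pythagorean. Hence $A\otimes_F K_{\rm py}=(A\otimes_F F_{\rm py})\otimes_{F_{\rm py}}K_{\rm py}$, and the index can only drop under scalar extension. The content is therefore the reverse inequality, and I will argue by contradiction: assume the index drops over $K_{\rm py}$ and derive a contradiction from total positivity. The basic tool is that $F_{\rm py}$ is pythagorean, so any anisotropic quadratic form over it which becomes isotropic over a totally positive (hence formally real over the relevant orderings) extension must be weakly isotropic, and over a pythagorean field weakly isotropic means isotropic. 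I will also use Theorems \ref{weak hyp involution} and \ref{anologue-for-involution}, which transfer weak isotropy and weak hyperbolicity from $K$ back to $F$.

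\textbf{Case (i).} If $pind(A)=1$ there is nothing to prove. Suppose $pind(A)=2$, so $A\otimes F_{\rm py}$ is Brauer equivalent to a quaternion algebra $Q$ over $F_{\rm py}$. Its norm form $n_Q$ is an anisotropic $2$-fold Pfister form. If $pind(A\otimes K)=1$, then $A\otimes K_{\rm py}$ is split. I will descend this to a statement over $K$ itself: $A\otimes K$ is split by the pythagorean closure, which is a direct limit of iterated extensions $L(\sqrt{1+x^2})$, and each such extension is totally positive. The goal is then to show that $A\otimes_F K$ carries an orthogonal involution of degree $2\cdot\deg$ that is weakly hyperbolic, for instance via $n_Q$ or an associated Pfister form becoming weakly hyperbolic over $K$. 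Theorem \ref{weak hyp involution} then gives weak hyperbolicity over $F$, hence hyperbolicity over $F_{\rm py}$, which contradicts $Q$ being non-split. The cleanest route is probably to work with the quadratic form $n_Q$ directly and use Becher's result that weakly isotropic forms over $F$ are characterised by semiorderings: if $n_Q$ became isotropic over $K_{\rm py}$, it would be weakly isotropic over $K$, hence over $F$ by total positivity, hence isotropic over $F_{\rm py}$.

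\textbf{Case (ii).} Here $\Ind(A)\le 4$, and $A$ has exponent $2$, so by Albert's theorem $A$ is a biquaternion algebra (or smaller) and carries a totally decomposable orthogonal involution $\sigma$. Moreover, $pind$ dropping over $K$ corresponds to isotropy of $(A,\sigma)\otimes K_{\rm py}$, using the Albert form or the fact that the index of a biquaternion algebra is detected by isotropy of its involutions. Isotropy over $K_{\rm py}$ gives weak isotropy over $K$, and Theorem \ref{anologue-for-involution} then gives weak hyperbolicity over $F$. Hence $(A,\sigma)$ is hyperbolic over $F_{\rm py}$, which forces $pind(A)$ to be small, matching $pind(A\otimes K)$. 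The cases with index $2$ reduce to (i).

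\textbf{Case (iii).} If $K/F$ is Galois, Theorem \ref{arbitrary Galois tp} makes $K_{\rm py}/F_{\rm py}$ totally positive. Apply case (ii) to the pair $F_{\rm py}\subseteq K_{\rm py}$ and the algebra $A\otimes F_{\rm py}$, whose index is at most $4$ and equals its own $pind$ because $F_{\rm py}$ is pythagorean. This yields $pind(A)=pind(A\otimes K)$.

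The main obstacle is case (ii): specifically, the step that the drop of the index over $K_{\rm py}$ translates into weak isotropy of a suitable totally decomposable involution over $K$, together with the index bookkeeping from index $4$ to index $2$. I would first try to handle this using the Albert form together with the quadratic-form characterisation above.
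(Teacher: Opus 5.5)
There are genuine gaps in your cases (i) and (ii). In (i), the quaternion algebra $Q$ and its norm form $n_Q$ exist only over $F_{\rm py}$, or over a finite subextension $E$ of $F_{\rm py}/F$. The algebra $A$ need not be Brauer equivalent to a quaternion algebra over $F$ or $K$, so the chain ``isotropic over $K_{\rm py}$, hence weakly isotropic over $K$, hence over $F$'' makes no sense for $n_Q$. What it actually requires is that $K_{\rm py}/F_{\rm py}$ be totally positive, and this fails without the Galois hypothesis. By Proposition \ref{root2 example}, for $K=\mathbb{Q}(\sqrt[4]{2})$ the form $\langle 1,-\sqrt{2}\rangle$ is anisotropic over $\mathbb{Q}_{\rm py}$ but isotropic over $K_{\rm py}$. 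Your argument uses nothing about $n_Q$ beyond its being an anisotropic form over $F_{\rm py}$, so it would prove this false statement.

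The involution variant does not rescue (i) either:
\begin{enumerate}
\item an involution on $A_K$ need not come from $F$, which Theorem \ref{weak hyp involution} requires;
\item splitting of $A_{K_{\rm py}}$ does not make an involution weakly hyperbolic;
\item a hyperbolic orthogonal involution over $F_{\rm py}$ does not contradict $Q$ being non-split (e.g.\ on $M_2(Q)$).
\end{enumerate}
The missing idea is to record the obstruction at an ordering, where only $A$ itself enters, and $A$ is defined over $F$. Since $W_t(F_{\rm py})=0$, the anisotropic form $n_Q$ has nonzero signature at some $P\in X_{F_{\rm py}}$. So $A$ is non-split over the real closure of $F$ at $P\cap F$. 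Total positivity extends $P\cap F$ to an ordering of $K$ and then of $K_{\rm py}$. The real closure of $K_{\rm py}$ at that ordering contains the real closure of $F$ at $P\cap F$, so $A_{K_{\rm py}}$ cannot be split. This is how the paper argues.

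In (ii), your main line rests on a false equivalence. For a fixed orthogonal involution $\sigma$ on a biquaternion algebra, isotropy of $\sigma_L$ forces $A_L$ to be non-division. The converse fails: an index drop does not make $\sigma_L$ isotropic. For instance, take $\sigma=\gamma_1\otimes\gamma_2$ on $Q_1\otimes Q_2$, which has trivial discriminant. Then $\sigma_L$ is isotropic only when $Q_1$ or $Q_2$ splits over $L$, whereas $\Ind(A_L)\le 2$ only requires $Q_1$ and $Q_2$ to share a slot. So Theorem \ref{anologue-for-involution} cannot be brought in this way.

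The Albert form, which you mention only as a fallback, is exactly what the paper uses, and it needs no involutions at all:
\begin{enumerate}
\item $q_A$ is defined over $F$, and $pind(A_K)<4$ means $(q_A)_{K_{\rm py}}$ is isotropic;
\item $K_{\rm py}/F$ is totally positive by Lemma \ref{py totally positive} and Remark \ref{totally positive over totally positive};
\item so $q_A$ is weakly isotropic over $F$ by Lemma \ref{totally positive criterion}, hence isotropic over the pythagorean field $F_{\rm py}$, contradicting $pind(A)=4$.
\end{enumerate}

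Your reduction of (iii) to (ii) for the pair $F_{\rm py}\subseteq K_{\rm py}$, via Theorem \ref{arbitrary Galois tp}, is correct and matches the paper. In that setting your $n_Q$ argument for the index-$2$ subcase does work, because $Q$ is then defined over the base field. As written, however, (iii) inherits the gap in (ii).
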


\bigskip

\noindent{\it Acknowledgment}: Its a pleasure to thank K. J. Becher and J.-P. Tignol for their comments on an earlier version of this paper.

	\section{Preliminaries and known results}\label{prelim}
	In this section we set up the basic notations to be used throughout this paper. We refer to the relevant sections in \cite{Lam1} and \cite{Lam} for more details. All the fields are assumed to be of characteristic different from $2$ and all the division algebras are finite-dimensional over their centers.
	\medskip

	We denote by $\sum F^2$ the set of elements of a field $F$ that can be expressed as a sum of squares in $F$. 
	A field $F$ is said to be formally real if $-1$ is not a sum of squares in $F$.  
	Hence, $F$ is formally real if $-1 \not\in \sum F^2$. A {\it preordering} of $F$ is a proper subset $T\subsetneq F$ such that $F^2\subseteq T, T+T\subseteq T$ and $T\cdot T\subseteq T$. An ordering $P$ of $F$ is a preordering of $F$ such that $P\cup -P=F$ and $P\cap -P=\{0\}$. By \cite[Chapter 8, Theorem 1.10]{Lam}, $F$ is formally real if and only if $F$ possesses at least one ordering.
	We denote by $X_F$ the set of all orderings on $F$. 
	Given an ordering $P\in X_F$, there exists, a unique up to isomorphism, real closure $F_P$ of $(F,P)$ in $F_{\rm al}$ (see \cite[Chapter 8, Theorem 2.8]{Lam}). 
	
	\medskip 
	
	A {\it semiordering} of a field $F$ is a subset $S\subset F$ such that $1\in S$, $F^2S\subset S$, $S+S\subset S$, $S\cup -S=F$ and $S\cap -S=\{0\}$ (see \cite[page 5]{Prestel}).
	Thus, every ordering of a field is a semiordering. But every semiordering need not be an ordering. For instance, let  $F=\mathbb{Q}((X_1))((X_2))\cdots((X_n))$ be an iterated Laurent series field over the field of rational numbers with $n\geq 3$. Consider the preordering $T= \sum F^2$ of $F$. Let $S=T\cup\{-a:a\in P\setminus T\}$, where $P$ is an ordering of $F$ containing $T$. By \cite[Proposition 14.12]{Lam1}, $S$ is a semiordering but not an ordering.
	
	\medskip 
	
	The field $F$ is said to be {\it pythagorean} if $F^2+F^2\subset F^2$, i.e., $\sum F^2=F^2$. There exists a smallest pythagorean subfield of $F_{\rm al}$ that contains $F$ (see \cite[Chapter 8, \S 4]{Lam}). This field is called the {\it pythagorean closure} of $F$ and is denoted by $F_{\rm py}$. 
	The pythagorean closure of $F$ can also be explicitly constructed as follows: let $\mathcal{F}$ be the family of extensions $K/F$, 
	$K \subset F_{\rm al}$, for which there exists a tower  
	\[
	F = K_0\subset K_1\subset\cdots\subset K_n =K,
	\]
	such that $K_{i+1}=K_i(\sqrt{1+a_i^2}), \text{ where } a_i\in K_i$. The compositum of all the fields in $\mathcal{F}$ is $F_{\rm py}$.
	\medskip 
	
	Let $q$ be a quadratic form over a field $F$ and let $V$ be its underlying vector space. The dimension $\dim q$ of $q$, is the dimension of the vector space $V$ over $F$. Let $b_q$ be the associated bilinear form to $q$.
	If $b_q$ is non-degenerate then $q$ is said to be {\it non-degenerate} or {\it regular}. In this paper, by a 
	quadratic form we mean a non-degenerate quadratic form. 
	A quadratic form $q$ of dimension $n$ is equivalent to a diagonal form $a_1X_1^2+\cdots +a_nX_n^2$ over $F$. The diagonal form $\sum_{i=1}^{n} a_iX_i^2$ is denoted by $\langle a_1,\ldots,a_n\rangle$.  
	We denote by $W(F)$ the Witt ring of $F$ (see \cite[Chapter 2]{Lam} for details).
	\medskip 
	
	A quadratic form $q$ is said to be {\it isotropic} over $F$, if there exists a non-zero vector $v \in V$ such that $q(v) = 0$. 
	A quadratic form $q$ over $F$ is said to be {\it weakly isotropic} if there exists a natural number $n\in\mathbb{N}$, such that the $n$ fold orthogonal 
	sum of $q$, $n \cdot q$ is isotropic. 
	For quadratic forms over arbitrary fields, isotropy is {\it not} the same as weak isotropy. 
	For a pythagorean field $F$ both these notions match i.e., every weakly isotropic quadratic form $q$ over $F$ is isotropic over $F$. 
	\medskip
	
	We next recall the notion of signature of a quadratic form with respect to orderings and semiorderings. 
	Let $q=\langle a_1,\ldots,a_n\rangle$ be a quadratic form over $F$ and $P\in X_F$ be an ordering on $F$. Let $F_P$ be the real closure of $F$ at $P$. Let $r$ be the number of $a_i$'s that are positive and $s$ be the number of $a_i$'s that are negative with respect to $P$. Then the {\it signature} of $q$ at $P$, denoted by ${\rm sign}_P (q)$, is equal to $r-s$. A quadratic form $q$ is called {\it definite} at $P$ if $|{\rm sign}_P(q)|=\dim q$ and {\it indefinite} at $P$ otherwise.  A quadratic form is called {\it totally indefinite} if it is indefinite at every ordering $P\in X_F$. Thus, if $q$ is weakly isotropic then it is totally indefinite. The converse is true if $F$ satisfies the {\it strong approximation property} (SAP) (see \cite[\S 9]{Prestel} for more details). 
	
	The {\it total signature} map ${\rm sign}$ is defined by 
	
	\[{\rm sign}: W(F)\to \prod_{P\in X_F}W(F_P) \]
	
	\[ q\mapsto \big({\rm sign}_P(q)\big)_P . \] 
	
	The kernel of this map, $\ker({\rm sign})$ is denoted by $W_t(F)$.  
	Every element of $W_t(F)$ is $2$-primary torsion and $W_t(F)=\ker\big(W(F)\to W(F_{\rm py}) \big)$ (see \cite[Chapter 8, Theorem 3.2 and Theorem 4.10]{Lam}). In particular, $W_t(F_{\rm py})=0$ (see \cite[Chapter 8, Theorem 4.1(1)]{Lam}). The signature map is defined more generally for $(A,\sigma)$, a central simple algebra with an orthogonal involution over $F$ (see \cite[\S 11]{boi}). Let $Trd_A$ denote the reduced trace of $A$. For $P\in X_F$ the signature of $\sigma$ at $P$ is defined by \[
	{\rm sign}_P\ \sigma:= \sqrt{{\rm sign}_P\ T_\sigma},\] where $T_\sigma$ is the \emph{involution\ trace\ form} of $(A,\sigma)$, which is a quadratic form on $A$ defined by \[
	T_\sigma(x):=Trd_A(\sigma(x)x), \text{for all }x \in A.\]
	\medskip 
	
	Let $S\subset F$ be a semiordering.  
	The signature of a quadratic form $q=\langle a_1,\ldots,a_n\rangle$ at $S$ is defined as 
	\[
	{\rm sign}_S(q)= (\text{number of }a_i\in S) -  (\text{number of } a_i\in -S).
	\] 
	A quadratic form $q$ is said to be {\it definite} with respect to the semiordering $S$ if $|{\rm sign}_S(q)|=\dim q$ and it is said to be {\it indefinite} with respect to $S$ if $|{\rm sign}_S(q)|<\dim q$. The connection between 
	semiorderings and weak isotropy is given by the following result due to Prestel (see \cite[Theorem 2.9]{Prestel}). 
	
	\begin{theorem} (Prestel) 
		A quadratic form $q$ is weakly isotropic over $F$ if and only if it is indefinite with respect to every {\it semiordering} of $F$. 
	\end{theorem}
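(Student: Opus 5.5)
We prove the two implications separately. The forward direction is a direct computation. For the converse we build, by Zorn's lemma, a semiordering at which $q$ is positive definite.

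\emph{Weakly isotropic $\Rightarrow$ indefinite at every $S$.} Let $q=\langle a_1,\ldots,a_n\rangle$ and suppose $q$ is definite at a semiordering $S$. Replacing $q$ by $-q$ if needed (this does not change weak isotropy), we may assume every $a_i\in S$.
\begin{enumerate}[(a)]
\item By $F^2S\subset S$, every term $a_i x^2$ lies in $S$, and it is nonzero whenever $x\neq 0$.
\item A finite sum of nonzero elements of $S$ is nonzero. Argue by induction: if $u\in S$ is a nonzero partial sum and $s\in S\setminus\{0\}$ with $u+s=0$, then $s=-u\in S\cap -S=\{0\}$, a contradiction. The new partial sum $u+s$ lies in $S$ since $S+S\subset S$.
\item An isotropic vector of $m\cdot q$ would give $\sum_{i,j}a_ix_{ij}^2=0$ with some $x_{ij}\neq 0$. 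This contradicts (a) and (b).
\end{enumerate}
Hence no $m\cdot q$ is isotropic, i.e.\ $q$ is not weakly isotropic.

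\emph{Not weakly isotropic $\Rightarrow$ definite at some $S$.} Scaling by $a_1^{-1}$ preserves weak isotropy, so assume $a_1=1$. Call $M\subset F$ \emph{admissible} if
\[
1\in M,\quad M+M\subset M,\quad \Big(\sum F^2\Big)\,M\subset M,\quad M\cap -M=\{0\}.
\]
The set
\[
M_0=\Big\{\textstyle\sum_i a_i\sigma_i : \sigma_i\in\sum F^2\Big\}
\]
satisfies the first three conditions. It also satisfies the fourth: a nonzero $x\in M_0\cap -M_0$ gives $x+(-x)=0$, which is an isotropic vector of $2N\cdot q$ for large $N$ with a nonzero coordinate. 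This is impossible since $q$ is not weakly isotropic. The admissible sets are closed under unions of chains, so Zorn's lemma gives a maximal admissible $M\supseteq M_0$.

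\emph{Key step: $M\cup -M=F$.} Let $a\in F^{\times}$ with $-a\notin M$, and put $M'=M+a\sum F^2$. Then $M'$ contains $1$, is closed under addition, and is closed under multiplication by $\sum F^2$. Suppose $M'\cap -M'\neq\{0\}$. Then there are $m_1,m_2\in M$ and $t_1,t_2\in\sum F^2$, not all giving $0$, with
\[
m_1+at_1=-(m_2+at_2).
\]
Put $t=t_1+t_2$, so $-at=m_1+m_2\in M$.
\begin{itemize}
\item If $t=0$, then $m_1+m_2=0$. This forces $m_1=m_2=0$ because $M\cap -M=\{0\}$ and $M+M\subset M$. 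Then $t_1=-t_2\in\sum F^2\cap-\sum F^2$, and formal reality of $F$ forces $t_1=t_2=0$, contradicting the choice of the element.
\item If $t\neq 0$, then $t^{-1}=t\,(t^{-1})^2\in\sum F^2$. Hence $-a=t^{-1}(-at)\in M$, contradicting $-a\notin M$.
\end{itemize}
So $M'$ is admissible, and maximality gives $M'=M$, i.e.\ $a\in M$. Thus $S:=M$ satisfies $S\cup -S=F$. It is a semiordering, since $F^2S\subset S$ follows from $(\sum F^2)M\subset M$. Finally every $a_i\in M_0\subset S$, so $q$ is definite at $S$.

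The only delicate point is this maximality step. It is where closure under multiplication by $\sum F^2$, and not merely by $F^2$, is needed, so that $t^{-1}$ can be absorbed. This is why admissible sets are defined with $\sum F^2$ rather than $F^2$.
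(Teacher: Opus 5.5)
The paper gives no proof of this theorem; it quotes it from Prestel. Your argument is the standard one: extend the cone $\sum F^2a_1+\cdots+\sum F^2a_n$ to a maximal proper one by Zorn's lemma. The forward direction, the admissibility of $M_0$, the chain argument and the maximality step $M+a\sum F^2=M$ are all correct.

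\textbf{The gap.} The normalization ``scaling by $a_1^{-1}$ preserves weak isotropy, so assume $a_1=1$'' is not justified. It covers only one side of the equivalence. What you actually produce is a semiordering $S$ containing $1,a_2/a_1,\ldots,a_n/a_1$, i.e.\ one at which $a_1^{-1}q$ is positive definite. The theorem, however, asks for a semiordering at which $q$ itself is definite. For orderings these are equivalent because $P\cdot P\subseteq P$. A semiordering is not closed under multiplication, so $a_i/a_1\in S$ for all $i$ does not place the $a_i$ on a common side of $S$. For instance, if $S$ is not an ordering, pick $x,y\in S$ with $xy\in -S$ and take $q=\langle x,xy\rangle$. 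Then $S$ satisfies your conclusion for $a_1^{-1}q=\langle 1,y\rangle$, but $q$ is indefinite at $S$.

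\textbf{The repair.} Choose $\epsilon\in\{\pm1\}$ with $\epsilon a_1\in S$ and set $S'=\epsilon a_1S$.
\begin{enumerate}[(a)]
\item Since $(\epsilon a_1)^{-1}=(\epsilon a_1)^{-2}\cdot\epsilon a_1\in F^2S\subseteq S$, you get $1\in S'$.
\item The other four semiordering axioms are unchanged when the set is multiplied by a nonzero scalar, so $S'$ is a semiordering.
\item Moreover $a_i=\epsilon\cdot(\epsilon a_1)(a_i/a_1)\in\epsilon S'$ for every $i$, so $q$ is definite at $S'$.
\end{enumerate}
Alternatively, drop $1\in M$ from the definition of admissible and build $M_0$ from the original entries $a_i$. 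Prove formal reality of $F$ separately: if $-1=\sum_{j=1}^k x_j^2$, then $(k+1)\langle a_1\rangle$, and hence $(k+1)\cdot q$, is isotropic. Then run the same Zorn argument, and at the end take $S=M$ or $S=-M$ according as $1\in M$ or $-1\in M$.

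\textbf{A small correction to your closing remark.} A set closed under addition and under multiplication by $F^2$ is automatically closed under multiplication by $\sum F^2$, since $(\sum_j x_j^2)m=\sum_j x_j^2m$. So using $\sum F^2$ in the definition of admissible is a convenience, not an extra requirement that the maximality step depends on.
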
 
	
	We next collect some basic facts on central simple algebras over a field. 
	Let $D$ be a central division algebra over $F$. The positive square root of $\dim_F D$ is called the {\it degree} of $D$. It is denoted by $\deg D$. Let $A$ be a central simple algebra over a field $F$. By the Artin-Wedderburn theorem, $A\simeq M_n( D)$, for some natural number $n$ and 
	division algebra $D$ over $F$. The {\it index} of $A$ is defined as $\Ind(A)\coloneqq\deg D$. In particular, $\Ind D=\deg D$. We denote by $Br(F)$ the Brauer group of $F$. If central simple algebras $A$ and $B$ over $F$ represents the same equivalence class in $Br(F)$, then we write $A\sim B\in Br(F)$. If $K/F$ is an extension of fields then the central simple algebra $A\otimes_FK$ over $K$ is denoted by $A_K$. 
	\medskip 
	
	The order of $A$ in $Br(F)$ is called the {\it exponent} of $A$ and it is denoted by $\exp(A)$. We denote by ${}_nBr(F)$ 
	the $n$-torsion elements in $Br(F)$, i.e., the subgroup of $Br(F)$ consisting of elements of exponent dividing $n$. In particular, ${}_2Br(F)$ is the subgroup of elements of exponent at most $2$. We refer to \cite[\S 9]{D} for more details. 
	\medskip

	A quaternion $F$-algebra $\big(\frac{a,b}{F}\big)$ is an $F$-algebra of degree $4$, 
	generated by $i,j$ with the relations: $i^2=a,j^2=b$ and $ij=-ji$, where $a,b\in F\setminus\{0\}$. The quadratic form $\langle 1,-a,-b,ab\rangle$ is called the \emph{norm} form of $\big(\frac{a,b}{F}\big)$. 
	It is well known that $\big(\frac{a,b}{F}\big)$ is a division algebra if and only if the associated norm form is anisotropic (see \cite[Chapter 3, Theorem 2.7]{Lam}).
	\medskip

	
	

	\bigskip 
	
	\medskip

	\section{Totally positive field extensions}\label{tp extension}
	In this section, we study totally positive field extensions. We show that, if a field extension $K/F$  is {\it Galois} and totally positive then $K_{\rm py}/F_{\rm py}$ is totally positive. 
	We recall the following results proved in \cite{Becher}, \cite{BLS} which will be used in this paper. 
	\medskip

	\begin{proposition} (\cite[Proposition 3.7]{BLS}) \label{equivalent conditions for quadratic} 
		Let $K=F(\sqrt{d})$ of $F$ be a quadratic extension. Then the following are equivalent: 
		\begin{enumerate}
			\item $K/F$ is totally positive.
			\item $d\in\sum F^2\setminus F^2$.
			\item Every ordering of $F$ extends to an ordering of $K$.
		\end{enumerate}
	\end{proposition}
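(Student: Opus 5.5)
We prove the cycle $(1)\Rightarrow(3)\Rightarrow(2)\Rightarrow(1)$. The only non-formal step is extending semiorderings in $(2)\Rightarrow(1)$.

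\emph{Step 1: $(1)\Rightarrow(3)$.} Let $P$ be an ordering of $F$. It is a semiordering, so by (1) it extends to a semiordering $S$ of $K$, meaning $S\cap F\supseteq P$. Now $S\cap F$ is itself a semiordering of $F$. Two semiorderings $T\subseteq T'$ of $F$ are equal: if $x\in T'\setminus T$, then $-x\in T\subseteq T'$, so $x\in T'\cap -T'=\{0\}$, which is impossible. Hence $S\cap F=P$. Since $d=(\sqrt d)^2\in S$, we get $d\in P$. A quadratic extension $F(\sqrt d)$ carries an ordering extending $P$ exactly when $d\in P$, by the standard real-closure argument (e.g.\ via $F_P$). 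So $P$ extends to an ordering of $K$, which is (3).

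\emph{Step 2: $(3)\Rightarrow(2)$.} If every ordering $P$ extends to $K$, then $d=(\sqrt d)^2$ is positive at every $P\in X_F$. By Artin's theorem (\cite[Chapter 8]{Lam}), $d\in\sum F^2$. Since $[K:F]=2$, $d\notin F^2$.

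\emph{Step 3: $(2)\Rightarrow(1)$, the main step.} Let $S$ be a semiordering of $F$. First, $\sum F^2\subseteq S$, because $F^2\subseteq S$ and $S+S\subseteq S$; in particular $d\in S$. Consider
\[
M_0=\Bigl\{\textstyle\sum_i s_i z_i^2 : s_i\in S,\ z_i\in K\Bigr\}\subseteq K .
\]
It contains $1$ and satisfies $M_0+M_0\subseteq M_0$ and $K^2M_0\subseteq M_0$.

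The key claim is $M_0\cap -M_0=\{0\}$. This is equivalent to the statement that for nonzero $s_1,\dots,s_n\in S$ the form $\varphi=\langle s_1,\dots,s_n\rangle$ stays anisotropic over $K$. Suppose instead $\varphi_K$ is isotropic. Then $\varphi$ contains a subform $c\langle 1,-d\rangle$ over $F$ (\cite[Chapter 7]{Lam}). The form $\varphi$ is positive definite at $S$ in the sense of the semiordering signature. A value represented by a subform of $\varphi$ lies in $S$, since it has the shape $\sum s_ix_i^2$. Therefore $c\in S$ and $-cd\in S$.

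Now write $d=\sum a_j^2$. Then $cd=\sum c a_j^2\in S$, using $F^2S\subseteq S$ and $S+S\subseteq S$, so $-cd\in -S$. Combined with $-cd\in S$ this forces $cd=0$, a contradiction. Notably, this argument never uses multiplicativity of $S$, which a semiordering need not have.

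By Zorn's lemma, choose a subset $M\supseteq M_0$ of $K$ that is maximal among those satisfying $1\in M$, $M+M\subseteq M$, $K^2M\subseteq M$ and $M\cap -M=\{0\}$. I expect the main obstacle to be checking that $M\cup -M=K$, i.e.\ that such a maximal set is a semiordering. This is Prestel's lemma (\cite[\S1]{Prestel}), and I would invoke it. If I had to prove it directly, I would take $x\notin M\cup -M$ and show that $M+K^2x$ or $M-K^2x$ still satisfies the axioms. This reduces to ruling out simultaneous relations $m_1+y^2x=0$ and $m_2-z^2x=0$ with $m_1,m_2\in M$; multiplying these by suitable squares and adding gives a contradiction with $M\cap-M=\{0\}$.

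Once $M$ is a semiordering of $K$, we have $M\cap F\supseteq S$. By the rigidity argument of Step 1 applied to the semiordering $M\cap F$, this gives $M\cap F=S$. So $S$ extends to $K$, and $K/F$ is totally positive.
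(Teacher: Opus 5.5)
The paper gives no proof of this statement; it quotes it from \cite[Proposition 3.7]{BLS}, so there is nothing in the paper to compare against line by line. Your cycle $(1)\Rightarrow(3)\Rightarrow(2)\Rightarrow(1)$ is correct.

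With the tools the paper does state, the shortest route to $(2)\Rightarrow(1)$ goes through Lemma~\ref{totally positive criterion}. Suppose $\varphi$ is anisotropic over $F$ and $\varphi_K$ is isotropic. Then $\varphi\simeq c\langle 1,-d\rangle\perp\psi$. Writing $d=a_1^2+\cdots+a_n^2$ shows that $n\langle 1\rangle\perp\langle -d\rangle$ is isotropic, hence so are $n\cdot\langle 1,-d\rangle$ and $n\cdot\varphi$, and $\varphi$ is weakly isotropic.

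You use the same subform theorem but bypass Becher's criterion. In effect you inline the direction of that lemma needed here, extending each semiordering $S$ directly through a maximal quadratic module containing $S$. This is more self-contained. It also isolates exactly where $d\in\sum F^2$ enters: the step $cd\in S$, which needs only $F^2S\subseteq S$ and $S+S\subseteq S$. The cost is the Zorn/maximality step, which the criterion would hide.

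There are two small things to tighten.

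\emph{Anisotropy before the subform theorem.} The subform theorem requires $\varphi$ to be anisotropic over $F$; it fails for $\varphi=\langle 1,-1\rangle$. Anisotropy does hold here: a nontrivial zero $\sum s_ix_i^2=0$ with $x_k\neq 0$ would put $s_kx_k^2\neq 0$ into $S\cap -S$. State this before invoking the theorem.

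\emph{The maximality sketch.} In your parenthetical sketch, $M+K^2x$ need not be closed under addition; use $M'=M+(\sum K^2)x$ instead. Then no case distinction is needed. First, $M'\cap -M'=\{0\}$ iff $-1\notin M'$: a nonzero $u\in M'\cap -M'$ gives $(a^2-b^2)u\in M'$ for all $a,b$, so $M'=K$. Now suppose $x\notin -M$ and $-1=m+\sigma x$ with $m\in M$, $\sigma\in\sum K^2$. Then $\sigma\neq 0$, and $x=-\sigma(1+m)\sigma^{-2}\in -M$ because $(\sum K^2)M\subseteq M$, a contradiction. So maximality forces $x\in M$ whenever $x\notin -M$.
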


	\begin{remark}\label{totally positive over totally positive} It is easy to see that 
		if $K/F$ is totally positive and $E/K$ is totally positive, then $E/F$ is totally positive.
	\end{remark}

	\medskip 
	
	The next result from \cite{Becher} characterizes the notion of total positiveness in terms of isotropy of quadratic forms. 
	\medskip 
	
	\begin{lemma} (\cite[Lemma 3.1]{Becher})\label{totally positive criterion}
		For a field extension $K/F$, the following are equivalent:
		\begin{enumerate}
			\item $K/F$ is totally positive.
			\item For any quadratic form $\varphi$ over $F$, if $\varphi_K$ is isotropic, then $\varphi$ is weakly isotropic over $F$.
		\end{enumerate}
	\end{lemma}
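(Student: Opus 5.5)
The plan is to reduce both implications to Prestel's theorem (weak isotropy $\iff$ indefinite at every semiordering), using a single extension criterion for semiorderings. Throughout, ``$S$ extends to $S'$'' means that $S'$ is a semiordering of $K$ with $S'\cap F=S$.

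For (1)$\Rightarrow$(2), assume $K/F$ is totally positive and $\varphi=\langle a_1,\ldots,a_n\rangle$ becomes isotropic over $K$. By Prestel's theorem it suffices to show that $\varphi$ is indefinite at every semiordering $S$ of $F$.
\begin{enumerate}[(a)]
\item Fix $S$ and extend it to a semiordering $S'$ of $K$. Since $S'\cap F=S$, each $a_i$ has the same sign with respect to $S'$ as with respect to $S$.
\item Suppose, for contradiction, that $\varphi$ is definite at $S$. Replacing $\varphi$ by $-\varphi$ if necessary, we may assume all $a_i\in S'$.
\item Take an isotropic vector $(x_1,\ldots,x_n)\in K^n$, so $\sum a_ix_i^2=0$. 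Each term $a_ix_i^2$ lies in $K^2S'\subset S'$, and at least one term, say $a_1x_1^2$, is nonzero.
\item Then $a_1x_1^2=-\sum_{i\ge2}a_ix_i^2$ lies in $S'\cap -S'=\{0\}$, a contradiction.
\end{enumerate}
Hence $\varphi$ is indefinite at $S$, and $\varphi$ is weakly isotropic over $F$.

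For (2)$\Rightarrow$(1), argue by contraposition. Suppose some semiordering $S$ of $F$ does not extend to $K$. The key step is the following extension criterion. If $-1$ is not of the form $\sum s_ix_i^2$ with $s_i\in S$ and $x_i\in K$, then $S$ extends. Granting this, non-extension gives $-1=\sum_{i=1}^n s_ix_i^2$. Then the form $\varphi=\langle 1,s_1,\ldots,s_n\rangle$ is isotropic over $K$. It is positive definite at $S$, so by Prestel's theorem it is not weakly isotropic over $F$, and (2) fails.

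To prove the criterion, let $M_0$ be the set of finite sums $\sum s_ix_i^2$ with $s_i\in S$ and $x_i\in K$. By assumption $-1\notin M_0$. By Zorn's lemma, choose $M\supseteq M_0$ maximal among subsets of $K$ with $1\in M$, $M+M\subset M$, $K^2M\subset M$ and $-1\notin M$. The main obstacle is to show that such a maximal $M$ is a semiordering.
\begin{enumerate}[(a)]
\item \emph{$M\cup -M=K$.} Suppose $a\notin M\cup -M$. By maximality, both $M+aK^2$ and $M-aK^2$ contain $-1$. So $-1=m_1+ax^2$ and $-1=m_2-ay^2$ with $m_1,m_2\in M$ and $x,y\neq0$. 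Solving the first for $a$ and substituting gives $-1=m_2+(1+m_1)(y/x)^2\in M$, a contradiction.
\item \emph{$M\cap -M=\{0\}$.} If $0\ne a\in M\cap -M$, then $ac^2\in M\cap -M$ for all $c\in K$. Since $2\neq0$, every element of $K$ is a difference of two squares, so $aK\subset M$ and hence $-1\in M$, a contradiction.
\end{enumerate}
Finally, $M\cap F$ is a subset of $F$ containing $S$, and it is closed under addition and multiplication by squares. It does not contain $-1$, so it cannot contain any nonzero element of $-S$. Since $S\cup -S=F$, this forces $M\cap F=S$. Therefore $M$ is the required extension of $S$ to $K$, which proves the criterion.
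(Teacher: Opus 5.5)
The paper gives no proof of this lemma; it only cites \cite[Lemma 3.1]{Becher}, so your argument has to stand on its own. Your route is sound: Prestel's theorem in both directions, plus the criterion that a semiordering $S$ of $F$ extends to $K$ whenever $-1\notin\sum SK^2$, proved via a Zorn-maximal set $M$. The following parts are correct as written: the direction (1)$\Rightarrow$(2), the reduction of (2)$\Rightarrow$(1) to that criterion, the Zorn setup, step (b), and the conclusion $M\cap F=S$.

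\textbf{The gap is in step (a).} You apply maximality to $M+aK^2$ and $M-aK^2$, but maximality only concerns sets closed under addition, and $M+aK^2$ generally is not:
\[
(m_1+ax^2)+(m_2+ay^2)=(m_1+m_2)+a(x^2+y^2),
\]
and $x^2+y^2$ need not be a square in $K$. So you cannot conclude that $-1=m_1+ax^2$ for a single square $x^2$.

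\textbf{The repair} is to use $M+a\sum K^2$ instead. This is the smallest subset of $K$ containing $M$ and $a$ that is closed under addition and under multiplication by $K^2$.
\begin{enumerate}
\item Maximality gives $-1=m_1+a\sigma_1$ and $-1=m_2-a\sigma_2$ with $\sigma_1,\sigma_2\in\sum K^2$.
\item Both $\sigma_i$ are nonzero, since otherwise $-1\in M$.
\item Eliminating $a$ gives $-1=m_2+(1+m_1)\,\sigma_2\sigma_1^{-1}$.
\item We have $\sigma_2\sigma_1^{-1}=\sigma_1\sigma_2\cdot\sigma_1^{-2}\in\sum K^2$.
\item $M\cdot\sum K^2\subseteq M$ follows from $K^2M\subseteq M$ and $M+M\subseteq M$.
\end{enumerate}
Hence $-1\in M$, which is the contradiction you wanted.

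Two smaller points. First, in (2)$\Rightarrow$(1), discard the terms with $s_ix_i=0$ so that $\langle 1,s_1,\ldots,s_n\rangle$ is a nondegenerate form, as the paper's conventions require. Second, in the last paragraph, the claim that $M\cap F$ contains no nonzero element of $-S$ should be justified by (b): if $-s\in M$ with $0\neq s\in S\subseteq M$, then $s\in M\cap -M$. You cannot instead multiply $-s$ by $s^{-1}$, because $M$ is only stable under multiplication by squares.
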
 
	
	The following lemma is proved in \cite{Becher}. 
	\medskip 
	
	\begin{lemma} (\cite[Corollary 3.3]{Becher}) \label{py totally positive}
		The pythagorean closure $F_{\rm py}$ of $F$ is a totally positive extension of $F$.
	\end{lemma}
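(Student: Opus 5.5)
The plan is to use the explicit description of $F_{\rm py}$ as the compositum of the family $\mathcal{F}$ of finite towers of quadratic extensions. Each step of such a tower is totally positive by Proposition \ref{equivalent conditions for quadratic}, and Remark \ref{totally positive over totally positive} then handles a finite tower. To pass from finite towers to the whole compositum, I will use the isotropy criterion of Lemma \ref{totally positive criterion}, since it reduces the question to finitely generated subfields.

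\emph{Step 1 (one step of a tower).} Let $L$ be a field and $a\in L$, and put $L'=L(\sqrt{1+a^2})$. If $1+a^2\in L^2$, then $L'=L$ and there is nothing to show. Otherwise $1+a^2\in\sum L^2\setminus L^2$, and Proposition \ref{equivalent conditions for quadratic}, (2)$\Rightarrow$(1), shows that $L'/L$ is totally positive. If $L$ is not formally real, every semiordering condition is vacuous, so this case causes no trouble either. By Remark \ref{totally positive over totally positive} and induction on $n$, every $K\in\mathcal{F}$ with tower $F=K_0\subset\cdots\subset K_n=K$ is totally positive over $F$.

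\emph{Step 2 (finite composita stay in $\mathcal{F}$).} Take $K,K'\in\mathcal{F}$ with towers $(K_i)_{i\le n}$ and $(K'_j)_{j\le m}$, where $K'_{j+1}=K'_j(\sqrt{1+b_j^2})$. I extend the tower of $K$ by the fields $KK'_j$. Since $b_j\in K'_j\subset KK'_j$, we get $KK'_{j+1}=KK'_j(\sqrt{1+b_j^2})$, which has the required form. Hence $KK'\in\mathcal{F}$, so $\mathcal{F}$ is directed and $F_{\rm py}=\bigcup_{K\in\mathcal{F}}K$. I expect this bookkeeping to be the only real obstacle. Its point is that the compositum is an honest directed union, so any finite set of elements of $F_{\rm py}$ lies in a single $K\in\mathcal{F}$.

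\emph{Step 3 (conclusion via Lemma \ref{totally positive criterion}).} Let $\varphi$ be a quadratic form over $F$ such that $\varphi_{F_{\rm py}}$ is isotropic. The coordinates of a nontrivial isotropic vector are finitely many elements of $F_{\rm py}$. By Step 2 they all lie in some $K\in\mathcal{F}$, so $\varphi_K$ is isotropic. Since $K/F$ is totally positive by Step 1, Lemma \ref{totally positive criterion}, (1)$\Rightarrow$(2), shows that $\varphi$ is weakly isotropic over $F$. Applying Lemma \ref{totally positive criterion} again, (2)$\Rightarrow$(1), for the extension $F_{\rm py}/F$ shows that $F_{\rm py}$ is totally positive over $F$.
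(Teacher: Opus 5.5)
Your proof is correct. The paper gives no argument of its own and only cites \cite[Corollary 3.3]{Becher}, and your route (each quadratic step $L(\sqrt{1+a^2})/L$ is totally positive, transitivity along finite towers, and a finiteness argument over the directed union via Lemma \ref{totally positive criterion}) is essentially the standard derivation behind that citation. If you prefer Step 1 not to rest on \cite{BLS}, it also follows directly from Lemma \ref{totally positive criterion}: an anisotropic form over $L$ that becomes isotropic over $L(\sqrt{d})$ contains a subform similar to $\langle 1,-d\rangle$, and this binary form is weakly isotropic when $d\in\sum L^2$.
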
	
	%
	%
	
	\medskip

	Let $K/F$ be a finite separable extension. Consider the trace bilinear form of the extension, ${\rm tr}:K\times K\to F$, given by ${\rm tr}(x,y)=tr_{K/F}(xy)$, where $tr_{K/F}$ is the trace of the field extension $K/F$. 
	Recall that a field $F$ is said to have the {\it strong approximation property} (SAP), if every semiordering on $F$ is an ordering. 
	We have the following lemma. 
	\medskip

	\begin{lemma}\label{semiordering is ordering}
		Let $F$ be a formally real field such that $F$ has SAP. Let $K/F$ be a finite field extension. Then the following are equivalent.
		\begin{enumerate}[(i)]
			\item The field extension $K/F$ is totally positive. 
			\item Every ordering of $F$ extends to an ordering of $K$. 
			\item For every $P\in X_F$, ${\rm sign}_P({\rm tr})>0$.
		\end{enumerate}
	\end{lemma}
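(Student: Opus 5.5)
We prove the cycle (i)$\Rightarrow$(ii)$\Rightarrow$(iii)$\Rightarrow$(i), using SAP on $F$ to identify semiorderings of $F$ with orderings, and Sylvester/Artin-type facts about real closures for the trace form.

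\emph{(i)$\Rightarrow$(ii).} Let $P\in X_F$. Since $F$ has SAP, it suffices to show $P$ extends to an ordering of $K$, and we know $P$ (a semiordering) extends to a semiordering $S$ of $K$. Rather than trying to show $S$ is an ordering (SAP need not pass to $K$), argue via quadratic forms: $P$ fails to extend to an ordering of $K$ iff $-1$ is a sum of elements of the form $\sum p_i x_i^2$ with $p_i\in P$, $x_i\in K$, i.e.\ iff some form $\langle 1,p_1,\dots,p_m\rangle$ with $p_i\in P$ is isotropic over $K$. By Lemma~\ref{totally positive criterion} this form would then be weakly isotropic over $F$; but it is positive definite at $P$, which is also a semiordering, contradicting Prestel's theorem. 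Hence $P$ extends.

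\emph{(ii)$\Rightarrow$(iii).} Fix $P\in X_F$ and the real closure $F_P$. The trace form of $K/F$ extended to $F_P$ is the trace form of the étale algebra $K\otimes_F F_P\cong F_P^{\,r}\times C^{\,s}$, where $C=F_P(\sqrt{-1})$ and $r$ is the number of orderings of $K$ extending $P$. Its signature is $r$ (each factor $F_P$ contributes $\langle1\rangle$, each $C$ contributes a hyperbolic plane $\langle 2,-2\rangle$). By (ii), $r\ge 1$, so ${\rm sign}_P({\rm tr})=r>0$.

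\emph{(iii)$\Rightarrow$(i).} Every semiordering of $F$ is an ordering $P$, and by the same computation ${\rm sign}_P({\rm tr})=r>0$ gives an ordering of $K$ above $P$, which is in particular a semiordering of $K$ extending $P$. So $K/F$ is totally positive.

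The main obstacle is (i)$\Rightarrow$(ii): one must not assume the semiordering extension is an ordering; the trick is the positive-definite form $\langle 1,p_1,\dots,p_m\rangle$ argument via Lemma~\ref{totally positive criterion} and Prestel's theorem (alternatively, the standard criterion that $P$ extends to $K$ iff $-1\notin\sum PK^2$). The trace computation requires $K/F$ separable, automatic in characteristic $0$ for formally real $F$.
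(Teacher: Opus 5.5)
Your proof is correct and follows the same cycle as the paper: the identity ${\rm sign}_P({\rm tr})=r$ that you derive from $K\otimes_F F_P\cong F_P^{\,r}\times F_P(\sqrt{-1})^{\,s}$ is exactly the content of the cited Scharlau lemma, and your (iii)$\Rightarrow$(i) uses SAP in the same way. For (i)$\Rightarrow$(ii), which the paper calls immediate from the definition, your detour through Lemma~\ref{totally positive criterion} and Prestel's theorem is valid but unnecessary: a semiordering $S$ of $K$ extending $P$ already contains $\sum PK^2$, so $-1\notin\sum PK^2$, and SAP plays no role in this step.
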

	\begin{proof}
		The implication $(i) \implies (ii)$ follows from the definition and $(ii) \implies (iii)$ follows from \cite[Chapter 3, Lemma 2.7]{Scha}. 
		Next, suppose $(iii)$ holds. By \cite[Chapter 3, Lemma 2.7]{Scha}, every ordering of $P$ extends to an ordering of $K$. As $F$ has SAP, every semiordering on $F$ is an ordering and hence $(i)$ is proved.
	\end{proof}
	\medskip

	\medskip 
	
	\begin{lemma} \label{2extension}
		Let $K/F$ be an extension of formally real fields. If $K/F$ is a totally positive field extension then for every subfield $F\subseteq E\subseteq K$, $E/F$ is also totally positive. 
	\end{lemma}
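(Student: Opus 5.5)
The plan is to prove this directly from the definition of total positivity; no further machinery should be needed. Let $S$ be a semiordering of $F$. Since $K/F$ is totally positive, there is a semiordering $S'$ of $K$ extending $S$, i.e.\ $S'\cap F=S$. The candidate extension to $E$ is the restriction $S_E:=S'\cap E$, and it remains to check that $S_E$ is a semiordering of $E$ and that $S_E\cap F=S$.

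The semiordering axioms pass to a subfield by restriction, and I would check them one at a time. First, $1\in S'$ and $1\in E$, so $1\in S_E$. Next, if $x\in E$ and $s\in S_E$, then $x^2 s\in E$, and $x^2s\in S'$ because $K^2S'\subset S'$; hence $E^2S_E\subset S_E$. Closure under addition follows in the same way, since $E$ is additively closed and $S'+S'\subset S'$. For any $x\in E$ we have $x\in S'$ or $x\in -S'$, and since $E=-E$ this gives $x\in S_E\cup -S_E$. Finally, $S_E\cap -S_E\subset S'\cap -S'=\{0\}$.

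Compatibility with $F$ follows from $F\subseteq E$: we get $S_E\cap F=S'\cap E\cap F=S'\cap F=S$. Thus every semiordering of $F$ extends to a semiordering of $E$, so $E/F$ is totally positive.

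Nothing here is a genuine obstacle. The only point needing care is the convention that ``extends'' means $S'\cap F=S$. If instead one only asks for $S\subseteq S'$, the two notions agree for semiorderings: given $S\subseteq S'\cap F$, suppose $x\in (S'\cap F)\setminus S$. Then $-x\in S\subseteq S'$, and so $x\in S'\cap -S'=\{0\}$, which is impossible since $0\in S$. Hence $S'\cap F=S$ in either formulation. The hypothesis that the fields are formally real plays no role beyond guaranteeing that semiorderings exist.
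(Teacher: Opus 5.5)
Your proof is correct, but it takes a different route from the paper's. The paper never works with semiorderings here. It invokes Becher's criterion (Lemma~\ref{totally positive criterion}): $K/F$ is totally positive if and only if every form over $F$ that becomes isotropic over $K$ is weakly isotropic over $F$. A form over $F$ that is isotropic over $E$ is isotropic over $K\supseteq E$, hence weakly isotropic over $F$, so $E/F$ is totally positive.

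You instead work directly from the definition: you restrict an extending semiordering $S'$ of $K$ to $E$ and check the axioms by hand. Your version is more elementary and self-contained. It does not depend on Becher's lemma, nor on Prestel's theorem relating semiorderings to weak isotropy that lies behind it. It also produces the extension to $E$ explicitly.

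The paper's version is a two-line consequence once the criterion is available. It also avoids the bookkeeping about what ``extends'' means, which you handle correctly via the maximality of semiorderings. Finally, it fits the paper's general strategy of testing total positivity through isotropy of quadratic forms, as in Proposition~\ref{finite}, since semiorderings are hard to control directly. (One trivial detail: $S_E\cap -S_E=\{0\}$ also needs $0\in S_E$, which holds because $0\in S'\cap E$.)
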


	\begin{proof} Let $q$ be a quadratic form over $F$ which becomes isotropic over $E$. In particular, $q$ becomes isotropic over $K$. As $K/F$ is totally positive, $q$ is weakly isotropic over $F$. Hence $E/F$ is totally positive by (Lemma \ref{totally positive criterion}).
		\medskip

	\end{proof}
	
	\medskip 
	
	\begin{proposition}\label{finite}
		Let $K/F$ be a totally positive field extension. Then, $K_{\rm py}/F_{\rm py}$ is totally positive if and only if following condition is satisfied: \begin{gather*}
			(*)\quad LE/E \text{ is totally positive for any finite subextensions }\\ L/K \text{ and } E/F \text{ with } L\subset K_{\rm py} \text{ and } E\subset F_{\rm py}.
		\end{gather*}
	\end{proposition}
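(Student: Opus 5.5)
We prove both directions using the quadratic-form criterion of Lemma \ref{totally positive criterion}, together with the fact that isotropy and weak isotropy are finitary conditions. Two observations are used throughout. First, $F_{\rm py}\subseteq K_{\rm py}$: $K_{\rm py}$ is pythagorean and contains $F$, so it contains the smallest such field. Second, $F_{\rm py}$ is the directed union of its finite subextensions $E/F$, and $K_{\rm py}$ is the directed union of the finite subextensions $L/K$. This follows from the explicit tower construction recalled in \S\ref{prelim}: each element lies in a finite compositum of towers of quadratic extensions of the form $K_i(\sqrt{1+a_i^2})$.

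($\Rightarrow$) Suppose $K_{\rm py}/F_{\rm py}$ is totally positive, and let $L/K$, $E/F$ be finite with $L\subset K_{\rm py}$ and $E\subset F_{\rm py}$. Let $\varphi$ be a form over $E$ with $\varphi_{LE}$ isotropic. Since $LE\subseteq K_{\rm py}$, the form $\varphi$ becomes isotropic over $K_{\rm py}$. By Lemma \ref{totally positive criterion}, $\varphi_{F_{\rm py}}$ is weakly isotropic, and since $F_{\rm py}$ is pythagorean, $\varphi_{F_{\rm py}}$ is isotropic. It remains to descend this to weak isotropy over $E$.

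We use the transitivity in Remark \ref{totally positive over totally positive}, in the form already packaged by Lemma \ref{py totally positive} applied to $E$. Note that $E_{\rm py}=F_{\rm py}$: we have $E\subseteq F_{\rm py}$, which is pythagorean, and conversely $F_{\rm py}\subseteq E_{\rm py}$. So $F_{\rm py}/E$ is totally positive, and Lemma \ref{totally positive criterion} shows that $\varphi$ is weakly isotropic over $E$. Applying Lemma \ref{totally positive criterion} once more, $LE/E$ is totally positive.

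($\Leftarrow$) Assume $(*)$ holds, and let $\varphi$ be a form over $F_{\rm py}$ that is isotropic over $K_{\rm py}$. The coefficients of $\varphi$ lie in some finite $E_0/F$ with $E_0\subset F_{\rm py}$. An isotropic vector over $K_{\rm py}$ has finitely many coordinates, and these lie in some finite $L/K$ with $L\subset K_{\rm py}$. Let $E\supseteq E_0$ be finite with $E\subset F_{\rm py}$. Then $\varphi$ is defined over $E$ and is isotropic over $LE$. By $(*)$ and Lemma \ref{totally positive criterion}, $\varphi$ is weakly isotropic over $E$, hence over $F_{\rm py}$. A final application of Lemma \ref{totally positive criterion} shows that $K_{\rm py}/F_{\rm py}$ is totally positive.

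The main subtlety is the bookkeeping that makes these descent arguments valid. The quadratic forms must be defined over the finite subfields, $LE$ must be a subfield of $K_{\rm py}$, and we rely on the identity $E_{\rm py}=F_{\rm py}$ for intermediate fields $E$. The last fact rests on the minimality of the pythagorean closure inside $F_{\rm al}$.
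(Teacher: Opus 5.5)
Your proof is correct and follows essentially the same argument as the paper. For the forward direction you go from isotropy over $K_{\rm py}$ to weak isotropy over $F_{\rm py}$, then descend to $E$ using $E_{\rm py}=F_{\rm py}$ and Lemma \ref{py totally positive}; for the converse you choose finite $E$ and $L$ containing the coefficients of the form and the coordinates of an isotropic vector and apply $(*)$. The mention of Remark \ref{totally positive over totally positive} is not needed, since Lemma \ref{py totally positive} applied to $E$ already does the job, and your use of the pythagorean property to turn weak isotropy over $F_{\rm py}$ into isotropy makes explicit a step the paper leaves implicit.
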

	\begin{proof} We have
		\[\begin{tikzcd}
			K\arrow[no head, to=1-2]&L\arrow[no head,to=1-3]&LE\arrow[no head,to=1-4]&K_{\rm py}\\
			F\arrow[no head,to=2-3]\arrow[no head,to=1-1]&&E\arrow[no head,to=2-4]\arrow[no head,to=1-3]&F_{\rm py}\arrow[no head,to=1-4]
		\end{tikzcd}\] 
		Suppose that $K_{\rm py}/F_{\rm py}$ is totally positive. Let $q$ be a quadratic form over $E$ which is isotropic over $LE$. Then $q$ is isotropic over $K_{\rm py}$. Since $K_{\rm py}/F_{\rm py}$ is assumed to be totally positive, $q$ is weakly isotropic over $F_{\rm py}$. Furthermore, $F_{\rm py}/E$ is totally positive since $E_{\rm py}=F_{\rm py}$. Hence, $q$ is weakly isotropic over $E$. 
		Thus, $LE/E$ is totally positive by (Lemma \ref{totally positive criterion}).
		\medskip 
		
		Conversely, assume that the condition $(*)$ is satisfied. Let $q$ be a quadratic form over $F_{\rm py}$ which becomes isotropic over $K_{\rm py}$. Choose a finite subextension $E/F$ of $F_{\rm py}/F$ such that $q$ is defined over $E$. Let $L/K$ be a finite field extension containing the field $KE$ and the coefficients of an isotropic vector of $q$ over $K_{\rm py}$. Then $q$ is isotropic over $L$. By the condition $(*)$, $q$ is weakly isotropic over $E$ and hence $q$ is weakly isotropic over $F_{\rm py}$. Thus, $K_{\rm py}$ over $F_{\rm py}$ is totally positive. 
	\end{proof}
	
	\medskip

	\begin{proposition}\label{finite Galois tp}
		Let $K/F$ be a totally positive finite Galois extension of formally real fields. Then $KF_{\rm py}/F_{\rm py}$ is a totally positive extension. 
	\end{proposition}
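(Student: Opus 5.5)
The plan is to reduce to finite subextensions of $F_{\rm py}/F$ and then induct along a tower of quadratic extensions $E_{i+1}=E_i(\sqrt{1+a_i^2})$.

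\emph{Reduction to finite $E$.} Let $q$ be a quadratic form over $F_{\rm py}$ which is isotropic over $KF_{\rm py}$. As in the proof of Proposition~\ref{finite}, choose a finite subextension $E/F$ of $F_{\rm py}/F$ such that $q$ is defined over $E$ and an isotropic vector has coordinates in $KE$. It then suffices to show that $KE/E$ is totally positive for every such $E$. Indeed, Lemma~\ref{totally positive criterion} then makes $q$ weakly isotropic over $E$, hence over $F_{\rm py}$. By the explicit construction of $F_{\rm py}$, every such $E$ lies in the top field of a finite tower $F=E_0\subset E_1\subset\cdots\subset E_n$ with $E_{i+1}=E_i(\sqrt{d_i})$ and $d_i=1+a_i^2\in\sum E_i^2$. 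Once $KE_n/E_n$ is known to be totally positive, $KE/E$ follows: a form over $E$ isotropic over $KE$ is isotropic over $KE_n$, hence weakly isotropic over $E_n$. Since $E_n\subset E_{\rm py}=F_{\rm py}$ and $F_{\rm py}/E$ is totally positive by Lemma~\ref{py totally positive}, the form is weakly isotropic over $E$. So I would prove by induction on $i$ that $KE_i/E_i$ is totally positive; the case $i=0$ is the hypothesis.

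\emph{Inductive step.} Put $E'=E_i$, $E=E_{i+1}=E'(\sqrt d)$, and assume $KE'/E'$ is totally positive. Since $K/F$ is finite Galois, $KE'/E'$ is finite Galois, and $KE=KE'(\sqrt d)$ is again Galois over $E'$. The element $d$ is a sum of squares in $KE'$. Hence either $\sqrt d\in KE'$, or Proposition~\ref{equivalent conditions for quadratic} shows that $KE'(\sqrt d)/KE'$ is totally positive. In both cases Remark~\ref{totally positive over totally positive} gives that $KE/E'$ is totally positive. What remains is to descend from $E'$ to the intermediate field $E$, i.e.\ to prove that $KE/E$ is totally positive.

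\emph{Main obstacle.} The key claim is that if $M/E'$ is finite Galois and totally positive and $E'\subset E\subset M$, then $M/E$ is totally positive. Note that Lemma~\ref{2extension} only covers the subextension $E/E'$, not $M/E$. I would argue with semiorderings and the action of $G={\rm Gal}(M/E')$. Fix a semiordering $S$ of $E$ and set $S'=S\cap E'$. By hypothesis $S'$ extends to a semiordering $T$ of $M$, and $T\cap E$ is some extension of $S'$ to $E$. In our situation $E=E'(\sqrt d)$ with $d\in\sum E'^2$, so $d\in S'$. I would show that an extension of $S'$ to $E$ is determined by whether $\sqrt d$ or $-\sqrt d$ is positive, using $x+y\sqrt d$ together with the square-class manipulations available for semiorderings. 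This would mean there are at most two extensions, and they are swapped by the nontrivial $E'$-automorphism of $E$. That automorphism lifts to some $\sigma\in G$, and one of $T,\sigma T$ then restricts to $S$. The uniqueness step for semiorderings is where I expect the real difficulty. If it fails, my fallback is the quadratic-form criterion: given $q$ over $E$ isotropic over $M$, apply a Scharlau transfer $s_*$ for $E/E'$ to $q$, or to $\langle 1,\sqrt d\rangle\otimes q$, and use the total positivity of $M/E'$ together with Prestel's theorem to conclude that $q$ is weakly isotropic over $E$.
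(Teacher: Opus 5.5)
\textbf{There is a genuine gap in the descent step.} Your reduction to a finite $E$ and the tower bookkeeping are fine, and the claim you isolate as the main obstacle is true. The argument you propose for it, however, fails.

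The uniqueness step is false. A semiordering is only stable under multiplication by squares. So its restriction to $E'$ and the sign of $\sqrt d$ say nothing about square classes of $E$ such as that of $x+y\sqrt d$, which in general is not the class of an element of $E'$ or of $\sqrt d\,E'$.

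Here is a counterexample. Let $E'=\mathbb{Q}((x))$, $d=2$, $E=\mathbb{Q}(\sqrt2)((x))$. Write $f=x^nu$ with $u$ a unit with residue $\bar u$. Call $f$ positive if $\bar u>0$ under the identity embedding of $\mathbb{Q}(\sqrt2)$ into the reals when $n$ is even, and under the conjugate embedding when $n$ is odd.
\begin{itemize}
\item Since $1+x\,\mathbb{Q}(\sqrt2)[[x]]$ consists of squares, this is a semiordering of $E$.
\item It has $\sqrt2>0$, $x>0$, $\sqrt2x<0$, so it is not an ordering.
\item It restricts to the ordering $x>0$ of $E'$, just as the ordering of $E$ with $\sqrt2>0$, $x>0$ does. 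In fact that ordering of $E'$ has four extensions to $E$.
\end{itemize}
For $g\in{\rm Gal}(M/E')$ we have $(gT)\cap E=g|_E(T\cap E)$. So the orbit of a single $T$ reaches at most two of these extensions, and your argument cannot show that the others extend to $M$.

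The transfer fallback does not close the gap either. Weak isotropy of $s_*(q)$ over $E'$ does not descend to $q$: for $s={\rm tr}_{E/E'}$, $s_*\langle\sqrt d\rangle$ is a hyperbolic plane, while $\langle\sqrt d\rangle$ is not weakly isotropic. Nor is there a trace formula for semiorderings relating signs of $s_*(q)$ at $S\cap E'$ to signs of $q$ at $S$.

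The missing idea is that for Galois extensions, total positivity is governed by orderings alone. Suppose $M/E'$ is finite Galois and every ordering of $E'$ extends to $M$. By normality, $M$ then embeds over $E'$ into every real closure of $E'$, so every ordering of every intermediate field $E$ extends to $M$. A finite Galois extension with this property is totally positive; this is \cite[Theorem 3.9]{BLS}, which the paper uses.

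You could also prove this directly with a Sylow $2$-subgroup of ${\rm Gal}(M/E)$:
\begin{itemize}
\item The odd-degree part is totally positive by Example~\ref{ex1}(1).
\item The $2$-part is a tower of steps $L(\sqrt c)$ with $c$ positive at every ordering of $L$, hence $c\in\sum L^2$.
\item So Proposition~\ref{equivalent conditions for quadratic} and Remark~\ref{totally positive over totally positive} apply.
\end{itemize}

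With this in hand your induction over the tower is unnecessary. The paper applies the criterion directly to $KF_{\rm py}/F_{\rm py}$. It uses that for an ordering $\widetilde P$ of $F_{\rm py}$ with $P=\widetilde P\cap F$, one has $(F_{\rm py})_{\widetilde P}=F_P$, and this field contains all roots of the minimal polynomial of a primitive element of $K$.
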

	\begin{proof}		
		Let $K=F(\theta)$ and let $f(t)\in F[t]$ be the minimal polynomial of $\theta$ over $F$. 		
		By \cite[Proposition 3.5]{BLS} and \cite[Chapter 3, Lemma 2.7]{Scha}, $f(t)$ has $[K:F]$ number of real roots, i.e., all the roots of $f(t)$ are in 
		a real closure $F_P$ of $F$. 
		Let $g(t)$ be the minimal polynomial of $\theta$ over $F_{\rm py}$. As 
		$g(t)$ divides $f(t)$, all the roots of $g(t)$ also lie in $F_P$. Since $F_{\rm py}/F$ is totally positive (see Lemma \ref{py totally positive}), there exists an ordering $\widetilde{P}$ of $F_{\rm py}$ extending $P$. As $F_{\rm py}/F$ is algebraic, we have $\left(F_{\rm py}\right)_{\widetilde{P}}=F_P$. Hence all the roots of $g(t)$ lie in $\left(F_{\rm py}\right)_{\widetilde{P}}$.
		Therefore, $KF_{\rm py}/F_{\rm py}$ being Galois, it is totally positive by \cite[Chapter 3, Lemma 2.7]{Scha} and \cite[Theorem 3.9]{BLS}.
	\end{proof}
	\medskip

	
	We now prove (Theorem \ref{arbitrary Galois tp}).
	\medskip
	
	\noindent{\bf Proof of  Theorem \ref{arbitrary Galois tp}}. We first show that $KF_{\rm py}/F_{\rm py}$ is totally positive. 
	As $K / F$ is Galois, we have $KF_{\rm py}/F_{\rm py}$ is Galois and we identify the Galois group $\Gal(KF_{\rm py}/F_{\rm py})$ with a subgroup of $\Gal(K/ F)$. In fact, there is a natural isomorphism of $\Gal (K / K \cap F_{\rm py})$ with $ \Gal (KF_{\rm py} / F_{\rm py} )$. 
	
	\[\begin{tikzcd}
		K\arrow[no head, to=1-2]\arrow[no head, from=1-1, to=2-1]&KF_{\rm py}\arrow[no head, to=1-3]\arrow[no head, from=1-2, to=2-2]&K_{\rm py}\arrow[no head, to=2-3]\\
		F\arrow[no head, to=1-1, to=2-2]&F_{\rm py}\ar[equal]{r}&F_{\rm py}\arrow[no head, to=1-3]
	\end{tikzcd}\]
	
	To show that $KF_{\rm py}/F_{\rm py}$ is totally positive, by \cite[Corollary 3.2]{BLS},
	it is enough to show that any finitely generated subextension $L / F_{\rm py}$ of $KF_{\rm py} / F_{\rm py} $ is totally positive. 
	As $KF_{\rm py} / F_{\rm py}$ is Galois, $L / F_{\rm py}$ is {algebraic} and finitely generated. Hence $L / F_{\rm py}$ is a {\it finite} field extension.
	\medskip
	
	\noindent{\bf case 1.} For $L$ as in the above paragraph, suppose further $L/F_{\rm py}$ is Galois.
	We show that $L / F_{\rm py}$ is totally positive. 
	Let $H = \Gal(KF_{\rm py} / L)$.  
	By the above identification of Galois groups, we suppose $H$ is a subgroup of $\Gal (K / F)$. 
	Let $M=K^{H} \subseteq K$. 
	As $M/(K\cap F_{\rm py})$ is a finite separable field extension, we choose $\theta \in M$ such that $M=(K\cap F_{\rm py})(\theta)$. 
	Further, as $\theta$ is algebraic over $F$, $F(\theta) /F$ is a finite field extension. 
	Let $\widetilde{M}\subseteq K$ be the Galois closure of $F(\theta)$ over $F$. So $\widetilde{M}/F$ is also a finite field extension. We have the following: 
	
	\[\begin{tikzcd}
		\widetilde{M}\arrow[no head,to=2-1]\\
		F(\theta)\arrow[no head, to=2-2]\arrow[no head, from=2-1, to=3-1]& M\coloneqq(K\cap F_{\rm py})(\theta)\arrow[no head, to=2-3]\arrow[no head, from=2-2, to=3-2]&MF_{\rm py}=L\\
		F\arrow[no head, to=3-2]& K\cap F_{\rm py}\arrow[no head, to=3-3]& F_{\rm py}\arrow[no head, to=2-3]
	\end{tikzcd}\]
	Since $K/F$ is totally positive, the finite Galois subextension $\widetilde{M}/F$ is also totally positive (see Lemma \ref{2extension}). By  (Proposition \ref{finite Galois tp}) above, $\widetilde{M}F_{\rm py}/F_{\rm py}$ is totally positive. As 
	$MF_{\rm py}/F_{\rm py}$ is a subextension of $\widetilde{M}F_{\rm py}/F_{\rm py}$, it is totally positive by (Lemma \ref{2extension}). Since $M=(K\cap F_{\rm py})(\theta)$ and $L=MF_{\rm py}$, we get $L/F_{\rm py}$ is totally positive. 
	Thus, if $L / F_{\rm py}$ is a finite Galois field extension then it is totally positive. 
	\medskip 
	
	\noindent{\bf case 2.} Let $E/F_{\rm py}$ be any finite degree subextension of $KF_{\rm py}/F_{\rm py}$. We show that $E/F_{\rm py}$ is totally positive. Let $L$ be the Galois closure of $E/F_{\rm py}$. Then, $L\subset KF_{\rm py}$ as  $KF_{\rm py}/F_{\rm py}$ is a 
	Galois field extension. Further, $L/F_{\rm py}$ is a Galois field extension of finite degree. By the above case 1., we see that $L/F_{\rm py}$ is totally positive. Hence, by (Lemma \ref{2extension}), $E/F_{\rm py}$ is also totally positive. 
	Thus $KF_{\rm py}/F_{\rm py}$ is totally positive.
	\medskip 
	
	We next show that  $K_{\rm py}/F_{\rm py}$ is totally positive, by using  $KF_{\rm py} / F_{\rm py}$ is totally positive. 
	We start by showing that ${(KF_{\rm py})}_{\rm py} = K_{\rm py}$. 
	As $K\subset KF_{\rm py}$, hence $K_{\rm py}\subset \big(KF_{\rm py}\big)_{\rm py}$. On the other hand, $KF_{\rm py}\subset K_{\rm py}$, hence $\big(KF_{\rm py}\big)_{\rm py}\subset K_{\rm py}$. Therefore, by 
	(Lemma \ref{py totally positive}), $K_{\rm py}/KF_{\rm py}$ is totally positive. 
	As we have already shown that $KF_{\rm py} / F_{\rm py}$ is totally positive, 
	by (Remark \ref{totally positive over totally positive}), we have, 
	$K_{\rm py}/F_{\rm py}$ is totally positive.\qed
	
	\medskip 
	
	\begin{remark}
		Let $K/F$ be a totally positive Galois extension of formally real fields. Then by the above (Theorem \ref{arbitrary Galois tp}), it follows that every anisotropic quadratic form over $F_{\rm py}$ remains anisotropic over $K_{\rm py}$.
	\end{remark}
	
	In the above (Theorem \ref{arbitrary Galois tp}), the hypothesis on the field extension $K / F$ being {\it Galois} totally positive is necessary, as can be seen from the following proposition.

	\begin{proposition} \label{root2 example} 
		Let $F = \mathbb{Q}$ and 
		let $K = \mathbb{Q}(\sqrt[4]{2})$ be the field extension given by a real fourth root of 
		$2$. Then $K /F$ is an algebraic totally positive field extension such that $K_{\rm py} / F_{\rm py}$ is not totally positive. 
	\end{proposition}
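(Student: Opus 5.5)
The plan has two parts: show $K/F$ is totally positive using that $\mathbb{Q}$ has SAP, then exhibit an intermediate field of $K_{\rm py}/F_{\rm py}$ that is a non-totally-positive quadratic extension, and conclude with Lemma~\ref{2extension}.

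\emph{$K/F$ is totally positive.} $\mathbb{Q}$ has a unique ordering and every semiordering of $\mathbb{Q}$ is that ordering, so $\mathbb{Q}$ has SAP. By Lemma~\ref{semiordering is ordering}, it therefore suffices to check that the unique ordering of $\mathbb{Q}$ extends to $K$. This holds because $K=\mathbb{Q}(\sqrt[4]{2})$ is by definition a subfield of $\mathbb{R}$, via the real fourth root of $2$. Restricting the ordering of $\mathbb{R}$ gives an ordering of $K$ extending the one on $\mathbb{Q}$. In particular $K$ is formally real, and so is $K_{\rm py}$.

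\emph{$K_{\rm py}/F_{\rm py}$ is not totally positive.} We have $\sqrt2=\sqrt{1+1^2}\in F_{\rm py}$, so $E\coloneqq F_{\rm py}(\sqrt[4]{2})=F_{\rm py}(\sqrt{\sqrt2})$ satisfies $F_{\rm py}\subseteq E\subseteq K_{\rm py}$.
\begin{enumerate}[(1)]
\item First we show that $E/F_{\rm py}$ is a genuine quadratic extension, i.e.\ $\sqrt[4]{2}\notin F_{\rm py}$. Every element of $F_{\rm py}$ is totally real. Indeed, by the tower description in \S\ref{prelim}, one proves inductively that each $K_i$ is totally real: if every conjugate of $a_i$ is real, then every conjugate of $1+a_i^2$ is a positive real, so adjoining $\sqrt{1+a_i^2}$ keeps all embeddings real. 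But $\sqrt[4]{2}$ has the non-real conjugate $i\sqrt[4]{2}$, so it is not in $F_{\rm py}$.
\item Next, apply Proposition~\ref{equivalent conditions for quadratic} to $E=F_{\rm py}(\sqrt d)$ with $d=\sqrt2$. It says $E/F_{\rm py}$ is totally positive iff $d\in\sum F_{\rm py}^2\setminus F_{\rm py}^2$. Since $F_{\rm py}$ is pythagorean, $\sum F_{\rm py}^2=F_{\rm py}^2$, so this set is empty. Hence $E/F_{\rm py}$ is not totally positive.
\item Finally, if $K_{\rm py}/F_{\rm py}$ were totally positive, Lemma~\ref{2extension} applied to the intermediate field $E$ would make $E/F_{\rm py}$ totally positive, contradicting (2).
\end{enumerate}

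The main obstacle is step (1), which shows $\sqrt{2}$ is not a square in $F_{\rm py}$. I would handle it with the total-reality argument above. A complete proof must also check that each tower step really preserves total reality, in particular for all conjugates of $a_i$ over $\mathbb{Q}$ and not only for $a_i$ itself. The remaining steps are direct applications of results already stated in the paper.
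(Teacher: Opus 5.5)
Your proof is correct. Its first half (SAP for $\mathbb{Q}$ together with Lemma~\ref{semiordering is ordering}) is the same as the paper's, but the second half takes a different route.

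The paper works at a finite level, below $F_{\rm py}$. With $E=\mathbb{Q}(\sqrt2)$ it shows $K/E$ is not totally positive, because $\sqrt2$ is negative at the second ordering of $\mathbb{Q}(\sqrt2)$ (Proposition~\ref{equivalent conditions for quadratic}). It then transfers this to the pythagorean closures via Proposition~\ref{finite} with $L=K$, whose proof uses that $F_{\rm py}/E$ is totally positive because $E_{\rm py}=F_{\rm py}$.

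You work above $F_{\rm py}$, with $KF_{\rm py}=F_{\rm py}(\sqrt[4]{2})$. You use the structural fact that a pythagorean field has no totally positive proper quadratic extension, so you need only Lemma~\ref{2extension}, not Proposition~\ref{finite}.

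The price is your step (1), $\sqrt[4]{2}\notin F_{\rm py}$, which the paper never states separately. It is implicit there: if $\sqrt[4]{2}\in F_{\rm py}$, then $K\subseteq F_{\rm py}$ would be totally positive over $E$. Your total-reality argument is sound once you add that the totally real algebraic numbers form a field, so the property passes from the tower fields to their compositum $F_{\rm py}$. Alternatively, you can borrow the paper's idea and avoid number-field arithmetic. Since $\mathbb{Q}(\sqrt2)_{\rm py}=F_{\rm py}$, Lemma~\ref{py totally positive} makes $F_{\rm py}/\mathbb{Q}(\sqrt2)$ totally positive. So the ordering of $\mathbb{Q}(\sqrt2)$ making $\sqrt2$ negative extends to a semiordering of $F_{\rm py}$, in which $\sqrt2$ is negative and hence not a square.

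What your route buys is the sharper conclusion that already $KF_{\rm py}/F_{\rm py}$ is not totally positive. This shows the example breaks exactly where the proof of Theorem~\ref{arbitrary Galois tp} uses the Galois hypothesis (Proposition~\ref{finite Galois tp}). The remaining step $K_{\rm py}/KF_{\rm py}$ is always totally positive.
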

	\begin{proof}		
		We first show that $K/F$ is totally positive. As $F$ has a unique ordering, by \cite[Corollary 14.8]{Lam1}, 
		$F$ has SAP, i.e., every semiordering is an ordering. As $K$ can be embedded in $\mathbb{R}$, the unique ordering of $F$ extends to an ordering of $K$. Hence by (Lemma \ref{semiordering is ordering}) $K / F$ is totally positive.
		\medskip
		
		Let $E=\mathbb{Q}(\sqrt{2})$. We now show that the quadratic extension $K/E$ is not totally positive. By (Proposition \ref{equivalent conditions for quadratic}), $K/E$ is totally positive if and only if $\sqrt{2}$ is a sum of squares in $E$, i.e., $\sqrt{2}$ is positive with respect to every ordering of $E$. On the other hand, by \cite[Chapter 8, Example 1.13]{Lam}, there is an ordering of $E$ with respect to which $\sqrt{2}$ is negative. 
		Hence $\sqrt{2}$ is not a sum of squares in $E$ and $K/E$ is not totally positive.
		\medskip 
		
		Clearly, $F\subset E\subset F_{\rm py}$ and $E_{\rm py}=F_{\rm py}$. We show that $K_{\rm py}/F_{\rm py}$ is not totally positive by using (Proposition \ref{finite}) with $L=K$. So, $LE/E$ is the same as $K/E$, which is not totally positive. This proves the proposition.
	\end{proof}
	%

	\medskip

	\begin{proposition}\label{quadratic totally positive}
		Let $K/F$ be a Galois totally positive field extension. Then $K(\sqrt{d})$ over $ F(\sqrt{d})$ is totally positive, for any $d \in \sum F^2\setminus F^2$.
	\end{proposition}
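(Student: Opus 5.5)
Since $d\in\sum F^2\setminus F^2$, Proposition \ref{equivalent conditions for quadratic} shows that $F(\sqrt d)/F$ is totally positive and that $F(\sqrt d)\subseteq F_{\rm py}$: every sum of squares is a square in $F_{\rm py}$, so $\sqrt d\in F_{\rm py}$. We may assume $\sqrt d\notin K$; otherwise $K(\sqrt d)=K$, which is a subextension of $K/F$, hence Galois and totally positive over $F$ (Lemma \ref{2extension}). The argument then mimics the first half of the proof of Theorem \ref{arbitrary Galois tp}, with $F(\sqrt d)$ in place of $F_{\rm py}$. Note that $K(\sqrt d)=K\cdot F(\sqrt d)$ is Galois over $F(\sqrt d)$. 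By \cite[Corollary 3.2]{BLS} it suffices to treat the finite subextensions, and as in case 2 of that proof, passing to Galois closures inside $K(\sqrt d)$ and applying Lemma \ref{2extension} reduces everything to finite Galois subextensions $L/F(\sqrt d)$.

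For such an $L$, the natural identification $\Gal(K(\sqrt d)/F(\sqrt d))\cong\Gal(K/K\cap F(\sqrt d))$ lets us write $L=MF(\sqrt d)$ with $M=K^H$ for $H=\Gal(K(\sqrt d)/L)$, exactly as in case 1. Choose $\theta$ with $M=(K\cap F(\sqrt d))(\theta)$, and let $\widetilde M\subseteq K$ be the Galois closure of $F(\theta)/F$. Then $\widetilde M/F$ is finite Galois, and it is totally positive by Lemma \ref{2extension}.

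It therefore remains to show that $\widetilde M(\sqrt d)/F(\sqrt d)$ is totally positive; Lemma \ref{2extension} then passes this to $L\subseteq\widetilde M(\sqrt d)$. This is the key step, and I would argue as in Proposition \ref{finite Galois tp}. Write $\widetilde M=F(\eta)$ with minimal polynomial $f$ over $F$. By \cite[Proposition 3.5]{BLS} and \cite[Chapter 3, Lemma 2.7]{Scha}, all roots of $f$ lie in $F_P$ for every $P\in X_F$. Every ordering $Q$ of $F(\sqrt d)$ restricts to some $P\in X_F$, and since $F(\sqrt d)/F$ is algebraic we have $F(\sqrt d)_Q=F_P$. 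The minimal polynomial $g$ of $\eta$ over $F(\sqrt d)$ divides $f$, so all its roots lie in $F(\sqrt d)_Q$. Since $\widetilde M(\sqrt d)/F(\sqrt d)$ is Galois, \cite[Chapter 3, Lemma 2.7]{Scha} together with \cite[Theorem 3.9]{BLS} then gives total positivity.

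The main point to verify is that the criterion of \cite[Theorem 3.9]{BLS} for finite Galois extensions needs only the real-root condition at every ordering of the base field. There is a possible shortcut: $\widetilde M F_{\rm py}/F_{\rm py}$ is totally positive by Proposition \ref{finite Galois tp}. If $\widetilde M\cap F_{\rm py}=\widetilde M\cap F(\sqrt d)$, then $\widetilde M(\sqrt d)$ and $F_{\rm py}$ are linearly disjoint over $F(\sqrt d)$, and one could try to descend total positivity from there. However, descent is not automatic, so I would rely on the direct real-root argument above.
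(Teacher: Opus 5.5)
Your proof is correct, but it takes a different route from the paper's. The paper argues in two lines. By Theorem~\ref{arbitrary Galois tp}, $K_{\rm py}/F_{\rm py}$ is totally positive. Since $d\in\sum F^2$ puts $F(\sqrt d)$ inside $F_{\rm py}$ and $K(\sqrt d)$ inside $K_{\rm py}$, Proposition~\ref{finite} (with $E=F(\sqrt d)$ and $L=K(\sqrt d)$) descends this to $K(\sqrt d)/F(\sqrt d)$.

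You instead rerun the first half of the proof of Theorem~\ref{arbitrary Galois tp} directly over $F(\sqrt d)$. Your key step is the argument of Proposition~\ref{finite Galois tp} with $F(\sqrt d)$ in place of $F_{\rm py}$, resting on the same citations the paper uses there. You also state the restriction of orderings in the direction that is actually needed.

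On the point you flag: it is true that a finite Galois extension $N/B$ is totally positive as soon as every ordering of $B$ extends to $N$. You can check this directly. Take a Sylow $2$-subgroup $S$ of $\Gal(N/B)$; then $N^S/B$ has odd degree, so it is totally positive by Springer's theorem. Along a chain $N^S=B_0\subset\cdots\subset B_m=N$ of quadratic steps, every ordering of $B_i$ extends to $B_{i+1}$, because the normal extension $N$ lies in every real closure of $B_i$. So Proposition~\ref{equivalent conditions for quadratic} and Remark~\ref{totally positive over totally positive} finish.

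As for what each route buys: the paper's is shorter once Theorem~\ref{arbitrary Galois tp} is available, but it needs $d\in\sum F^2$ in order to work inside the pythagorean closures. Yours never passes through $K_{\rm py}$, and its main line uses only that $F(\sqrt d)/F$ is algebraic. So it actually shows that $KE/E$ is totally positive for every algebraic extension $E/F$.

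Two small corrections.

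\textbf{The reduction to $\sqrt d\notin K$.} Your justification is wrong: total positivity of $K/F$ does not by itself give total positivity of $K/F(\sqrt d)$ (compare $\mathbb{Q}(\sqrt[4]{2})$ over $\mathbb{Q}(\sqrt 2)$). The reduction is superfluous, however, since nothing in your main argument uses $\sqrt d\notin K$.

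\textbf{The discarded shortcut.} It does work, with no linear disjointness needed. Let $q$ be a form over $F(\sqrt d)$ that becomes isotropic over $\widetilde M(\sqrt d)\subseteq\widetilde M F_{\rm py}$. Then $q$ is isotropic over $F_{\rm py}$, by Proposition~\ref{finite Galois tp}, Lemma~\ref{totally positive criterion}, and pythagoreanness. Hence $q$ is weakly isotropic over $F(\sqrt d)$, because $F_{\rm py}=F(\sqrt d)_{\rm py}$ is totally positive over $F(\sqrt d)$ by Lemma~\ref{py totally positive}. This is precisely the descent carried out in Proposition~\ref{finite}.
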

	\begin{proof} 
		As $K /F$ is Galois and totally positive, by (Theorem \ref{arbitrary Galois tp}), we have $K_{\rm py}/F_{\rm py}$ is totally positive. Further, by the hypothesis on $d$, we have 
		$F(\sqrt{d}) \subset F_{\rm py}$ and $K(\sqrt{d}) \subset K_{\rm py}$. 
		
		\[\xymatrix{ 
			K \ar@{-}[r] & K({\sqrt{d}}) \ar@{-}[r] & K_{\rm py}  \\ 
			F \ar@{-}[r] \ar@{-}[u] & F({\sqrt{d}}) \ar@{-}[r] \ar@{-}[u] & F_{\rm py} \ar@{-}[u] \\}
		\]
		Hence, by the above (Proposition \ref{finite}), we have $K(\sqrt{d})/F(\sqrt{d})$ is totally positive.
	\end{proof} 
	
	\medskip

	\begin{remark} The hypothesis $K/ F$ being {\it Galois} totally positive is necessary in the above (Proposition \ref{quadratic totally positive}). We consider the algebraic field extension 
		$K = \mathbb{Q}(\sqrt[4]{2})$ over $F = \mathbb{Q}$. By (Proposition \ref{root2 example}), we see that $K/F$ is totally positive. But as seen in the proof of (Proposition \ref{root2 example}), for $d=2$, $K(\sqrt{d})$ over $ F(\sqrt{d})$ is not totally positive. 
	\end{remark} 
	\begin{remark} As shown above, for $K = \mathbb{Q}(\sqrt[4]{2})$ and $F= \mathbb{Q}$, we have $K/F$ is totally positive (see Proposition \ref{root2 example}). Let $F(X)$ denote the rational function field in one variable over $F$. Then $K(X) / F(X)$ is not totally positive (see \cite[Corollary 3.14]{BLS}). Hence the notion of total positiveness is not ``stable'' in the sense that it does not remain invariant over purely transcendental extensions of degree $1$. 
	\end{remark}
	
	\section{Examples of totally positive extensions}\label{examples}
	
	We next discuss some examples of totally positive field extensions. 
	We start with the following well known examples. 
	\medskip 
	
	\begin{example}\label{ex1}\normalfont
		\noindent	\begin{enumerate}
			
			\item By Springer's theorem \cite[Chapter 7, Theorem 2.7]{Lam}, every odd degree field extension is totally positive.
			\item The rational function field $F(x)$ over $F$ is totally positive (see \cite[Chapter 9, Lemma 1.1]{Lam}). 
			This implies that if $K = F(x_1, \ldots, x_n)$ is a purely transcendental field 
			extension of $F$ then $K/F$ is totally positive. 
			\item By Springer's theorem for complete discrete valued fields (see \cite[Chapter 6, Theorem 1.4]{Lam}), the field $K=F((x))$ of Laurent series over $F$ is totally positive. 
			\item Let $q$ be a weakly isotropic form over $F$ and $K=F(q)$ be a rational function field of $q$. Then $K/F$ is totally positive by \cite[Theorem 4.3]{Becher}. 
		\end{enumerate}
		
	\end{example}
	
	\medskip

	\medskip 
	
	\begin{proposition}
		Let $F$ be a formally real algebraic field extension of $\mathbb{Q}$. A finite field extension $K/F$ is totally positive if and only if every ordering of $F$ extends to an ordering of $K$.  In particular, a formally real number field is totally positive over $\mathbb{Q}$.
	\end{proposition}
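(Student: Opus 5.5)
The plan is to reduce to Lemma \ref{semiordering is ordering}. That lemma says that for a field $F$ with SAP and a finite extension $K/F$, total positivity is equivalent to every ordering of $F$ extending to $K$. So the whole statement follows once we know that every formally real algebraic extension $F$ of $\mathbb{Q}$ has SAP, i.e.\ that every semiordering of $F$ is an ordering.

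To prove SAP for $F$, we first treat the number field case. If $F/\mathbb{Q}$ is finite, then $F$ is a global field. Global fields have SAP; this is classical and follows from weak approximation for the finitely many real places (see \cite[\S 9]{Prestel} or \cite[Chapter 14]{Lam1}). In semiordering language: given a semiordering $S$ of $F$, suppose $S$ were not an ordering. Then there are $a,b\in S$ with $ab\in -S$. By Prestel's theorem and the fact that $F$ has SAP, the form $\langle 1,a,b,-ab\rangle$ would be totally indefinite but definite at $S$, which is impossible for fields with SAP. We take this from the cited literature rather than reprove it.

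For a general algebraic $F$, write $F=\bigcup F_i$ as the directed union of its finitely generated, hence finite, subextensions $F_i/\mathbb{Q}$. A semiordering $S$ of $F$ restricts to a semiordering $S\cap F_i$ of $F_i$, and each $F_i$ is formally real. Each such restriction is an ordering by the number field case, so it is closed under multiplication. Any $a,b\in S$ lie in a common $F_i$, hence $ab\in S$. Thus $S$ is an ordering and $F$ has SAP.

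With SAP in hand, Lemma \ref{semiordering is ordering} ((i)$\Leftrightarrow$(ii)) gives the equivalence for any finite $K/F$. For the final assertion, take $F=\mathbb{Q}$ with its unique ordering and $K$ a formally real number field. Then $K$ has an ordering, and its restriction to $\mathbb{Q}$ is necessarily the unique ordering of $\mathbb{Q}$, so that ordering extends to $K$. Hence $K/\mathbb{Q}$ is totally positive, as in the proof of Proposition \ref{root2 example}.

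The main obstacle is the SAP statement for number fields. If an exact citation is not available, I would argue directly from the Hasse--Minkowski theorem together with Prestel's theorem. A form that is indefinite at all real places is isotropic at every real completion. Taking $n\cdot q$ for large $n$ with $n\cdot q$ of dimension at least $5$ makes it isotropic at all finite places as well, so $n\cdot q$ is isotropic globally and $q$ is weakly isotropic. Prestel's theorem then shows that a form indefinite at all orderings is indefinite at all semiorderings. Applying this to $\langle 1,a,b,-ab\rangle$ rules out any semiordering that is not an ordering.
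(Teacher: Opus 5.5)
Your proposal is correct and follows the paper's proof: establish SAP for $F$, apply Lemma \ref{semiordering is ordering}, and get the final assertion by restricting an ordering of $K$ to the unique ordering of $\mathbb{Q}$. The only difference is that the paper simply cites \cite[Corollary 14.8]{Lam1} for SAP, whereas you prove it: Hasse--Minkowski plus Prestel's theorem applied to $\langle 1,a,b,-ab\rangle$ for number fields, followed by the directed-union argument. This is valid, but note that the paper defines SAP as ``every semiordering is an ordering'', so it is your last-paragraph Hasse--Minkowski argument, not the middle paragraph that appeals to ``SAP'', that actually establishes it.
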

	\begin{proof} We note that $F$ has SAP (see \cite[Corollary 14.8]{Lam1}). The first part of the proposition now follows from the above (Lemma \ref{semiordering is ordering}). 
		\medskip 
		
		Suppose next that $K$ a formally real number field. To show that $K$ over $\mathbb Q$ is a totally positive field extension, we start 
		with an ordering $P$ of $K$. Clearly, $P\cap\mathbb{Q}$ is an ordering of $\mathbb{Q}$. 
		If $\sigma$ is an embedding of $K$ into $\mathbb{R}$, then by pulling back the ordering $\mathbb{R}^2$ of $\mathbb{R}$, we get an ordering of $K$. As $\mathbb{Q}$ is a uniquely ordered field with the ordering $\mathbb{R}^2\cap\mathbb{Q}$, we have $\mathbb{R}^2\cap\mathbb{Q}=P\cap\mathbb{Q}$. In particular, every ordering of $\mathbb{Q}$ has an extension to an ordering of $K$. As $\mathbb{Q}$ has SAP, using (Lemma \ref{semiordering is ordering}) we conclude the proof.
	\end{proof}
	\medskip 
	

	\begin{proposition}[Generic splitting field of a quadratic form]
		Let $q$ be a quadratic form of even dimension over $F$. The generic splitting field $K$ of $q$ over $F$ is totally positive if and only if $q$ is torsion.
	\end{proposition}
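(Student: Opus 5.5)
The plan is to use the standard description of the generic splitting field as the Knebusch splitting tower. Put $F_0=F$ and $q_0=q_{\rm an}$. Inductively, set $F_{i+1}=F_i(q_i)$, the function field of the quadric of $q_i$, and let $q_{i+1}$ be the anisotropic part of $q_{F_{i+1}}$. The tower stops at the first $h$ with $q_h=0$, and $K=F_h$. Since $\dim q$ is even, each nonzero $q_i$ has dimension at least $2$, so each step is a genuine function field. I then treat the two directions separately.

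\emph{Torsion implies totally positive.} Assume $q\in W_t(F)$, i.e.\ $q$ is torsion in $W(F)$. Its image $q_i$ in $W(F_i)$ is then torsion too, so by Pfister's theory $2^k q_i=0$ in $W(F_i)$ for some $k$. Since $q_i$ is anisotropic and nonzero, the form $2^k\cdot q_i$ has positive dimension and is hyperbolic, so it is isotropic. Thus $q_i$ is weakly isotropic over $F_i$. By Example~\ref{ex1}(4), $F_{i+1}=F_i(q_i)$ is totally positive over $F_i$. Applying Remark~\ref{totally positive over totally positive} repeatedly up the tower shows that $K=F_h$ is totally positive over $F$.

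\emph{Totally positive implies torsion.} Assume $K/F$ is totally positive. It suffices to show that ${\rm sign}_P(q)=0$ for every $P\in X_F$, since then $q\in W_t(F)$, which is torsion. Suppose instead that ${\rm sign}_P(q)\neq 0$ for some $P$; replacing $q$ by $-q$ if necessary, I may assume it is positive.
\begin{enumerate}[(1)]
\item Diagonalize $q\cong q_+\perp q_-$ on $V=V_+\oplus V_-$, where $q_+$ has all coefficients in $P$ and $q_-$ all in $-P$. Then $r=\dim q_+>\dim q/2$.
\item Over $K$ the form $q$ is hyperbolic, so $V_K$ contains a totally isotropic subspace $W$ of dimension $\dim q/2$.
\item Counting dimensions, $\dim(W\cap V_{+,K})\ge \dim q/2+r-\dim q=r-\dim q/2>0$. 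Any nonzero vector in this intersection is an isotropic vector of $(q_+)_K$.
\item By Lemma~\ref{totally positive criterion}, $q_+$ is then weakly isotropic over $F$.
\item But $q_+$ is positive definite at the ordering $P$, and so is every multiple $n\cdot q_+$, so no $n\cdot q_+$ is isotropic. This contradicts (4).
\end{enumerate}
Hence every signature of $q$ vanishes and $q$ is torsion.

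I expect the main obstacle to be the second direction, because signatures at semiorderings of $K$ are awkward to compare with hyperbolicity. The dimension-count argument above avoids them entirely: it uses only the criterion of Lemma~\ref{totally positive criterion} and orderings of $F$. In the first direction, the only point needing care is checking that each intermediate anisotropic part $q_i$ is weakly isotropic, and that follows from the torsion hypothesis as explained.
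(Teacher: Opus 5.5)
Your proof is correct, and the direction ``torsion $\Rightarrow$ totally positive'' is the paper's argument: it goes up the splitting tower using Example~\ref{ex1}(4) and Remark~\ref{totally positive over totally positive}. You also justify why each torsion $q_i$ is weakly isotropic, which the paper leaves implicit. In fact any relation $m\cdot q_i=0$ in $W(F_i)$ with $m\geq 1$ already gives this, so Pfister's $2$-primary result is not needed.

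For ``totally positive $\Rightarrow$ torsion'' your route differs from the paper's.
\begin{itemize}
\item The paper extends each $P\in X_F$ to an ordering $\widetilde{P}$ of $K$ and uses $F_P\subset K_{\widetilde{P}}$. It then concludes ${\rm sign}_P(q)={\rm sign}_{\widetilde{P}}(q_K)=0$ because $q_K$ is hyperbolic.
\item You never use orderings of $K$. Your dimension count places a nonzero vector of a half-dimensional totally isotropic subspace inside $V_+\otimes_F K$. So the $P$-definite part $q_+$ becomes isotropic over $K$, and Lemma~\ref{totally positive criterion} then contradicts definiteness at $P$.
\end{itemize}
The paper's version is shorter, but it relies on an easy, unstated fact. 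The definition of total positivity only extends $P$ to a \emph{semiordering} of $K$, and one must note that such a semiordering contains $\sum PK^2$, so $P$ extends to an ordering. Your argument needs only the isotropy criterion and orderings of $F$, and in effect proves that fact inline.
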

	\begin{proof}
		Let $F=F_0\subset F_1\subset F_2\subset\cdots\subset F_h=K$ be the generic splitting tower of $q$, and let $q_0=q_{\rm an}$ and $F_i=F_{i-1}(q_{i-1})$ where $q_i=(q_{i-1})_{\rm an}$ is the anisotropic part of $q_{i-1}$ over $F_i$. Then $\dim q_h\leq 1$. 
		Since $q$ is an even dimensional form, the $K$-anisotropic part of $q$ is zero. 
		\medskip

		If $K/F$ is totally positive then for every ordering $P$ of $F$ there is an ordering ${\widetilde P}$ of $K$ extending $P$. Hence the real closure $F_P \subset K_{{\widetilde P}}$ and ${\rm sign}_P(q)= {\rm sign}_{{\widetilde P}} (q_h) = 0$. Hence $q$ is a torsion form (see \cite[Chapter 8, Theorem 3.2]{Lam}).
		\medskip 
		
		Conversely, let $q$ be a torsion form. So each $q_i$ ($1\leq i\leq h$) is torsion. Hence, by (Example \ref{ex1}($4$)), for every $i$, $ 1 \leq i \leq h$ the field extension $F_i/F_{i-1}$ is totally positive. In particular, $K/F$ is totally positive by (Remark \ref{totally positive over totally positive}).	
	\end{proof}
	
	
	Let $A$ be a central simple algebra over $F$ and $F(A)$ be the function field of the Severi-Brauer variety of $A$. Recall that $A$ is said to be locally split over $F$ if $A_{F_P}=0$ for all orderings $P \in X_F$.
	\begin{theorem}
		Let $F$ be a formally real field and let $A$ be a central simple algebra over $F$ of any degree. Then $A$ is locally split over $F$ if and only if every ordering of $F$ can be extended to $F(A)$. 
	\end{theorem}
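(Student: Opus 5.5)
Both directions rest on one standard fact: for an ordering $P$ of $F$ with real closure $F_P$, the ordering $P$ extends to the function field $F(A)$ if and only if $A_{F_P}$ is split. Here $F(A)$ is the function field of the Severi--Brauer variety $SB(A)$. I read ``$A_{F_P}=0$'' in the definition of locally split as ``$A_{F_P}$ is trivial in $Br(F_P)$'', i.e.\ split.

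\emph{Locally split $\Rightarrow$ orderings extend.} Fix $P\in X_F$ and suppose $A_{F_P}$ is split. Then $SB(A)\times_F F_P\cong \mathbb{P}^{n-1}_{F_P}$ with $n=\deg A$, and so
\[
F(A)\otimes_F F_P \cong F_P(x_1,\dots,x_{n-1}).
\]
This is a domain because $F_P/F$ is algebraic and $F$ is algebraically closed in $F(A)$. Hence $F(A)$ embeds $F$-linearly into the purely transcendental field $F_P(x_1,\ldots,x_{n-1})$. By Example \ref{ex1}(2) the latter is totally positive over $F_P$, so it carries an ordering restricting to the unique ordering of $F_P$. Pulling this ordering back along the embedding gives an ordering of $F(A)$ whose restriction to $F$ is $P$.

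\emph{Orderings extend $\Rightarrow$ locally split.} Let $\widetilde P$ be an ordering of $F(A)$ extending $P$, and let $R$ be a real closure of $(F(A),\widetilde P)$. The algebraic closure of $F$ inside $R$ is a real closure of $(F,P)$, so we may assume $F_P\subseteq R$. Since $A_{F(A)}$ is split, $A_R$ is split. The restriction map $Br(F_P)\to Br(R)$ is injective: both groups are $\mathbb{Z}/2$, generated by the class of $\big(\frac{-1,-1}{\cdot}\big)$, and $R$ is real closed, so this quaternion algebra does not split over $R$. Therefore $A_{F_P}$ is split. This holds for every $P\in X_F$, so $A$ is locally split.

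The main obstacle is the step in the first direction from ``$A_{F_P}$ split'' to ``the variety has enough $F_P$-points'' to produce an ordering. I handle it by identifying $F(A)F_P$ with a rational function field. The alternative would be the Artin--Lang / Sylvester-type criterion: an ordering $P$ extends to the function field of a smooth geometrically integral variety $X$ as soon as $X$ has a smooth $F_P$-point. Here any $F_P$-point of $SB(A)_{F_P}\cong\mathbb{P}^{n-1}_{F_P}$ works, and I would cite this criterion if the embedding argument needs more justification.
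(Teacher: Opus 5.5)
Your proof is correct and follows the paper's route in both directions. For ``$\Rightarrow$'' the paper likewise embeds $F(A)$ into the purely transcendental field $F(A_{F_P})$ over $F_P$ and restricts an ordering extending that of $F_P$. For ``$\Leftarrow$'' it likewise observes that $A_{F_P}\sim\big(\frac{-1,-1}{F_P}\big)$ would have to split over $F(A)\cdot F_P\subseteq (F(A))_S$, which is impossible because that field is real. Your version is somewhat more careful, since you make the tensor product a field explicitly and place $F_P$ inside $R$.

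One step needs tightening. By definition, total positivity only extends the ordering of $F_P$ to a \emph{semi}ordering of $F_P(x_1,\ldots,x_{n-1})$, not to an ordering. To get an ordering, either cite the standard fact that orderings extend to rational function fields (as the paper does via \cite[Chapter 8, Example 1.13(C)]{Lam}), or argue as follows: if no ordering extension existed, some form $\langle 1,p_1,\ldots,p_m\rangle$ with $p_i$ positive would become isotropic over the extension, contradicting total positivity.
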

	\begin{proof}
		Suppose $A$ is locally split over $F$. Let $P \in X_F$. As $A_{F_P}=0$, $F(A_{F_P})$ is a purely transcendental field extension over $F_P$ (see \cite[Theorem 13.11]{Sal}). As $F(A)\cdot F_P\subset F(A_{F_P})$ and $F(A_{F_P})$ is formally real, we have $F(A)$ is also formally real. Moreover, the unique ordering of $F_P$ can be extended to an ordering $S$ of $F(A_{F_P})$ (see \cite[Chapter 8, Example 1.13(C)]{Lam}). Then $S \cap F(A)$ is an ordering of $F(A)$ extending the ordering $P$ of $F$. Thus every ordering of $F$ can be extended to $F(A)$.
		\vspace{.2cm}
		
		Suppose now that every ordering of $F$ can be extended to $F(A)$. This implies that $F(A)$ is formally real. Let $P \in X_F$. We will show that $A_{F_P} =0$. Suppose $A_{F_P} \neq 0$. Then $A_{F_P}= \big( \frac{-1,-1}{F_P}\big)$. As $F(A)$ is the function field of the Severi-Brauer variety of $A$, we have $\big( \frac{-1,-1}{F(A)\cdot F_P}\big)=0$ by \cite[Corollary 13.9 and Theorem 13.11]{Sal}.
		Hence, $\langle1,1,1,1\rangle=0$ over $F(A)\cdot F_P$ and so $F(A)\cdot F_P$ is not a formally real field.
		
		On the other hand, if $S \subseteq F(A)$ is an ordering extending $P \subseteq F$, then $F_P \subseteq (F(A))_S$. Hence, $F(A)\cdot F_P \subseteq (F(A))_S$, which is formally real. This is a contradiction as subfields of formally real fields are also formally real. Hence $A_{F_P }=0$ for all $P \in X_F$, i.e., $A$ is locally split over $F$.
	\end{proof}
	
	\medskip 
	
	The field invariant $\hat{u}(F)$ of a field $F$ is defined as follows 
	\[\hat{u}(F)=\sup\{\dim q:q\text{ is anisotropic and weakly isotropic} \}.\]
	We show below that the $\hat{u}$-invariant does not behave well under totally positive field extensions. 
	
	\begin{proposition} 
		There exists a totally positive odd degree field extension $K/F$ such that $\hat{u}(K)=\infty$ and $\hat{u}(F)=0$.
	\end{proposition}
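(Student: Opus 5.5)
The plan is to take the base field $F$ pythagorean, so that $\hat{u}(F)=0$ holds automatically, and to let $K$ be an odd degree extension of $F$, so that $K/F$ is totally positive by Springer's theorem (Example \ref{ex1}(1)). All of the work then goes into showing $\hat{u}(K)=\infty$.

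\emph{Choice of fields.} Let $k=\mathbb{Q}(X_1,X_2,X_3,\ldots)$ be the rational function field in countably many variables, and put $F=k_{\rm py}$. Since $F$ is pythagorean, every weakly isotropic form over $F$ is isotropic, as recalled in \S\ref{prelim}. Hence $\hat{u}(F)=0$. Next let $\theta\in F_{\rm al}$ be a root of a cubic over $F$ that is irreducible over $F$; I would try $t^3-t-X_1$ or $t^3-X_1$. Put $K=F(\theta)$. Then $[K:F]=3$ is odd, so $K/F$ is totally positive. In particular $K$ is formally real.

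\emph{Showing $\hat{u}(K)=\infty$.} For each $n$ I need an anisotropic form over $K$ which is weakly isotropic. By Prestel's theorem such a form is indefinite at every semiordering, and torsion forms are weakly isotropic, so it is enough to exhibit anisotropic torsion forms of unbounded dimension. The natural candidates are Pfister forms
\[\langle\langle -s_1,\ldots,-s_n\rangle\rangle=\langle 1,-s_1\rangle\otimes\cdots\otimes\langle 1,-s_n\rangle\]
with each $s_i\in\sum K^2$; such a form is torsion because $\langle 1,-s_i\rangle$ has signature $0$ at every ordering of $K$. Because $F$ is pythagorean, no such $s_i$ can lie in $F$ as a non-square. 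The $s_i$ must therefore genuinely involve $\theta$, for instance $s_i=1+(\theta X_{i+1})^2$ or $s_i=\theta^2+X_{i+1}^2$. The key step is to prove that these Pfister forms are anisotropic. For this I would pass to a subfield that avoids the infinite pythagorean tower. Concretely, one can realize $K$, or a field containing the relevant data, inside a field of iterated Laurent series in the variables $X_{i+1}$ over a smaller field. Then Springer's theorem for complete discretely valued fields (as in Example \ref{ex1}(3)) reduces anisotropy of $\langle\langle -s_1,\ldots,-s_n\rangle\rangle$ to anisotropy of residue forms, one variable at a time. Finally one has to check that the residue forms stay anisotropic, using that $1+\theta^2$ (or the analogous element) is not a square in the relevant residue field.

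\emph{Main obstacle.} The hard step is controlling square classes in $K=F(\theta)$, given that $F$ is itself an infinite pythagorean closure. A non-square of $K$ could become a square after adjoining further iterated roots $\sqrt{1+a^2}$ with $a$ in the tower. To handle this, I would first work over the finite levels $k_j(\theta)$, where $k_j$ is obtained from $k$ by finitely many steps $K_{i+1}=K_i(\sqrt{1+a_i^2})$. Since every element of $K$ lives at some such finite level, it would suffice to show the following: the element $s_i$ does not become a square in $k_j(\theta)$ for any $j$. The tool for this is the set of orderings of $k_j$ that extend to $k_j(\theta)$ in exactly one way (one real root of the cubic), compared with those $k_j$-orderings where the cubic has three real roots. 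If this approach fails, I would replace the variable construction with a known example of a pythagorean field admitting a non-pythagorean odd degree extension with infinitely many independent totally positive square classes.
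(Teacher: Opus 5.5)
Your outer frame matches the paper's: a pythagorean base gives $\hat{u}(F)=0$, and odd degree gives total positivity. But the one substantive claim, $\hat{u}(K)=\infty$, is not established, and the route you sketch for it fails.

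Suppose you realize $K$ inside iterated Laurent series with $X_2,\ldots,X_{n+1}$ as uniformizers and $\theta$ in the coefficient field. Then $1+\theta^2X_{i+1}^2$ is a $1$-unit, and $\theta^2+X_{i+1}^2=\theta^2(1+\theta^{-2}X_{i+1}^2)$ is a square times a $1$-unit. By Hensel's lemma every $s_i$ is therefore a square there, so your Pfister form is hyperbolic over that field. Springer's theorem then only returns isotropic residue forms of the shape $\langle 1,-1\rangle\otimes\cdots$.

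Your proposed tool for non-squareness cannot work either. Each $s_i$ is a sum of squares, hence positive at every ordering of $k_j(\theta)$. So no sign information (real roots of the cubic, number of extensions of orderings) can distinguish $s_i$ from a square.

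Even non-squareness of each $s_i$ would not suffice, nor would infinitely many independent totally positive square classes as in your fallback. Neither yields anisotropy of $\langle\langle -s_1,\ldots,-s_n\rangle\rangle$: a number field has infinitely many such classes, yet every torsion $3$-fold Pfister form over it is hyperbolic.

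The missing idea is that you do not need torsion forms of large dimension at all. A form is weakly isotropic as soon as it contains a weakly isotropic subform, so the unbounded part may be positive definite. The paper takes a pythagorean field $E$ and an odd degree extension $L/E$ with some $d\in\sum L^2\setminus L^2$. It sets $F=E((X))$ (which is pythagorean) and $K=L((X))$, and uses $q_n=n\langle 1\rangle\perp X\langle 1,-d\rangle$. Springer's theorem makes $q_n$ anisotropic, because $n\langle 1\rangle$ is anisotropic over the formally real field $L$ and $d\notin L^2$. And $q_n$ is weakly isotropic because its subform $X\langle 1,-d\rangle$ is.

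Your variables could play the same role if they sit in the slots of the form rather than inside the entries, e.g. $\langle 1,-s\rangle\otimes\langle\langle X_2,\ldots,X_n\rangle\rangle$. But every version of the argument needs a single non-square sum of squares in an odd degree extension of a pythagorean field, and that is exactly what your proposal leaves unproved. The paper simply assumes such $E$ and $L$ exist. One can take $E=\mathbb{Q}_{\rm py}$, $L=E(\sqrt[3]{2})$, $d=1+\sqrt[3]{2}$, but showing $d\notin L^2$ requires an arithmetic or Galois-theoretic argument, not orderings.
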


	\begin{proof} 
		Let $E$ be a formally real pythagorean field. Hence, every weakly isotropic form over $E$ is isotropic. In particular, $\hat{u}(E)=0$. Let $L/E$ be an odd degree field extension such that
		$L$ is not pythagorean. Let $F=E((X))$ and $K=L((X))$ be Laurent series fields in one variable over $E$ and $L$, respectively. The field $F$ is pythagorean 
		(see \cite[Chapter 8, Proposition 4.11]{Lam}) and hence, $\hat{u}(F)=0$. By Springer's theorem (see Example \ref{ex1}($1$)), $K/F$ is totally positive. Let $d\in \sum L^2 \setminus L^2$. For every $n\in\mathbb{N}$, consider the quadratic form $q_n=n\cdot\langle 1\rangle\perp \langle X,-dX\rangle$ over $K$. Then each $q_n$ is anisotropic and is weakly isotropic over $K$. Therefore, $\hat{u}(K)=\infty$.
	\end{proof}

	\section{Weak isotropy and Weak hyperbolicity over totally positive extensions}\label{involutions}
	Let $K/F$ be a totally positive field extension and $q$ be a quadratic form over $F$. If $q$ is isotropic over $K$ then we know that $q$ is weakly isotropic over $F$. In this section, we look at the analogous question for algebras with involution.
	
	Let $(A,\sigma)$ be a central simple $F$-algebra with an orthogonal involution. Then $(A, \sigma)$ is \emph{isotropic} over $F$ if there exists a nonzero $x \in A$ such that $ \sigma(x)x=0$ and it is said to be \emph{weakly isotropic} if there exist nonzero $x_1,\ldots, x_n \in A$ such that
	$\sum_{i=1}^{n}\sigma(x_i)x_i=0$ (see \cite{ul}). If there exists an element $e \in A$ with $e^2=e$ and $\sigma(e)=1-e$ then $(A,\sigma)$ is said to be $hyperbolic$ (see \cite[Definition 6.8]{boi}). Similarly, 
	($A, \sigma)$ is said to be $weakly\ hyperbolic$ if there exists an $n \in \mathbb{N}$ such that $(M_n(F),t) \otimes_F (A, \sigma)$
	is hyperbolic, where $t$ denotes the transpose involution, see (\cite{ul}).
	
	
	\begin{lemma} \label{weak hyperbolic signature}
		Let $(A,\sigma)$ be a central simple algebra over $F$ with an orthogonal involution. If $\sigma$ is weakly hyperbolic over $F_P$, for all $P \in X_F$ then ${\rm sign}_P\ \sigma=0$ for all $P \in X_F$.
	\end{lemma}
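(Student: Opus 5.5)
Fix $P \in X_F$ and work over the real closure $F_P$. The plan is to reduce the statement to one about quadratic forms over a real closed field, where weak hyperbolicity coincides with hyperbolicity and forces vanishing signature.

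The first step is the base change. Since $F_P$ is real closed, $A_{F_P}$ is Brauer equivalent either to $F_P$ or to $(\frac{-1,-1}{F_P})$. Signatures are computed after extending scalars, so ${\rm sign}_P\,\sigma$ is the signature of $\sigma_{F_P}$ with respect to the unique ordering of $F_P$. It therefore suffices to show that a weakly hyperbolic orthogonal involution on a central simple algebra over a real closed field $R$ has signature zero.

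Next, unwind the hypothesis. Weak hyperbolicity over $R = F_P$ gives some $n$ with $(M_n(R),t)\otimes(A_R,\sigma_R)$ hyperbolic. The following facts about signatures are standard (see \cite[\S 11]{boi}):
\begin{itemize}
\item The involution trace form is multiplicative, $T_{t\otimes\sigma} \simeq T_t\otimes T_\sigma$.
\item $T_t\simeq n^2\langle 1\rangle$, so ${\rm sign}(T_t)=n^2$.
\item Consequently ${\rm sign}(t\otimes\sigma)=n\,{\rm sign}(\sigma)$.
\end{itemize}
So it is enough to show that a hyperbolic involution $\tau$ has ${\rm sign}(T_\tau)=0$.

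For this, take $e$ with $e^2=e$ and $\tau(e)=1-e$, write $B$ for the algebra, and decompose $B=eB\oplus(1-e)B$. For $x\in eB$ we have $\tau(x)x=\tau(x)e\,ex$, and $\tau(x)e = \tau(e x)\cdot$... it is cleaner to argue as follows. For $x=ey$ we get $\tau(x)x=\tau(y)(1-e)ey=0$, since $(1-e)e=0$. Hence $eB$ is a totally isotropic subspace of $T_\tau$, and $\dim eB=\frac12\dim B$ because $\tau$ interchanges the idempotents $e$ and $1-e$, which therefore have equal reduced trace. Thus $T_\tau$ is hyperbolic and its signature is $0$.

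Combining the steps gives $n^2\,{\rm sign}(T_\sigma)=0$, hence ${\rm sign}_P\,\sigma=0$. The main obstacle is justifying multiplicativity of the trace form, or equivalently of the signature, under tensor products. If that is cumbersome, I would instead note directly that $T_{t\otimes\sigma}\simeq T_t\otimes T_\sigma$ via $Trd_{M_n\otimes A}=tr\otimes Trd_A$ on pure tensors, which is a routine check.
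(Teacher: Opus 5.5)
Your proof is correct, but it takes a different route from the paper's. The paper argues case by case at each $P\in X_F$:
\begin{itemize}
\item if $A_{F_P}$ is split, $\sigma_{F_P}$ is adjoint to a weakly hyperbolic quadratic form $q_P$, so ${\rm sign}(q_P)=0$ and hence ${\rm sign}_P\,\sigma=0$;
\item if $A_{F_P}$ is not split, it quotes \cite[Corollary 11.11]{boi}: every orthogonal involution on a non-split algebra over a real closed field has signature zero, so the hypothesis is not even needed there.
\end{itemize}
You instead work uniformly from the definition ${\rm sign}_P\,\sigma=\sqrt{{\rm sign}_P\,T_\sigma}$. The identity $T_{t\otimes\sigma}\simeq n^2\langle 1\rangle\otimes T_\sigma$, together with the fact that a hyperbolic involution $\tau$ has hyperbolic trace form ($eB$ is a totally isotropic subspace of half dimension), gives $n^2\,{\rm sign}_P(T_\sigma)=0$. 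This removes the case distinction; the dichotomy $A_{F_P}\sim F_P$ or $(\frac{-1,-1}{F_P})$ that you state at the outset is never actually used. It also avoids both the identification of $\sigma$ with an adjoint involution and the non-split vanishing result. In fact it shows slightly more: over any field, weak hyperbolicity of $(A,\sigma)$ already forces $T_\sigma$ to be torsion in the Witt ring. The cost is that you must supply the standard facts behind your dimension count: $Trd\circ\tau=Trd$ for involutions of the first kind, $\dim_F eB=\deg B\cdot Trd(e)$ for an idempotent $e$ (both checked over a splitting field), and nondegeneracy of $T_\tau$. The paper's proof, by contrast, is two lines once the reference is granted. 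You should also delete the abandoned computation ``$\tau(x)x=\tau(x)e\,ex$, \ldots'' from your write-up.
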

	\begin{proof}
		Let $P\in X_F$. If $A_{F_P}$ is split then $\sigma$ corresponds to a quadratic form $q_P$ which is weakly hyperbolic. Hence, ${\rm sign}_P(\sigma\otimes F_P)={\rm sign}_P(q_P)=0$. If $A_{F_P}$ is not split then ${\rm sign}_P(\sigma)=0$ by \cite[Corollary 11.11]{boi}. Hence ${\rm sign}_P(\sigma)=0$ for all $P\in X_F$.
	\end{proof}
	\medskip

	\begin{lemma}\label{ED-property}
		Let $K/F$ be a totally positive field extension. Let $(A,\sigma)$ be a central simple algebra with an orthogonal involution over $F$ such that $(A,\sigma)_K$ is weakly isotropic over $K$. Then the involution trace form $T_\sigma$ is weakly isotropic over $F$.
	\end{lemma}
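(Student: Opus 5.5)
Since $(A,\sigma)_K$ is weakly isotropic, there are nonzero $x_1,\ldots,x_n\in A_K$ with $\sum_{i=1}^n\sigma(x_i)x_i=0$. We transfer this to the quadratic form $n\cdot T_\sigma$ over $K$ and then descend weak isotropy to $F$ using Lemma \ref{totally positive criterion}.

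\textbf{Step 1: isotropy over $K$.} Apply the reduced trace to the relation. Since $Trd_{A_K}$ extends $Trd_A$ and $(T_\sigma)_K = T_{\sigma_K}$, we get
\[
\sum_{i=1}^n T_{\sigma}(x_i)=Trd_{A_K}\Big(\sum_{i=1}^n\sigma(x_i)x_i\Big)=0 .
\]
The vector $(x_1,\ldots,x_n)$ is nonzero, so it is an isotropic vector of $(n\cdot T_\sigma)_K$. Hence the quadratic form $n\cdot T_\sigma$, which is defined over $F$, is isotropic over $K$.

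\textbf{Step 2: descent to $F$.} Since $K/F$ is totally positive, Lemma \ref{totally positive criterion} shows that $n\cdot T_\sigma$ is weakly isotropic over $F$. So $m\cdot(n\cdot T_\sigma)=(mn)\cdot T_\sigma$ is isotropic over $F$ for some $m$, and therefore $T_\sigma$ itself is weakly isotropic over $F$.

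\textbf{Step 3: the key point to verify.} The only subtle issue is that Step 1 needs an honest isotropic vector, not merely a vanishing trace. This holds because $x_i$ is not required to have $T_\sigma(x_i)\neq 0$: the tuple $(x_1,\ldots,x_n)$ is nonzero as an element of $A_K^n$, and that is all isotropy requires. We also need $T_\sigma$ to be nondegenerate, so that it is a quadratic form in the paper's sense. This is standard for an involution trace form, since $Trd$ gives a nondegenerate bilinear form and $\sigma$ is bijective. With these checked, the argument is complete.
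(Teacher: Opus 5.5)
Your proof is correct and follows the paper's argument: you apply the reduced trace to $\sum_i\sigma(x_i)x_i=0$ to get an isotropic vector of $n\cdot T_\sigma$ over $K$, then descend via Lemma \ref{totally positive criterion}. Your version is more careful than the paper's, which writes $T_\sigma(\sum_i\sigma(x_i)x_i)=0$ where $Trd_A$ is meant, and leaves implicit that Lemma \ref{totally positive criterion} is applied to the form $n\cdot T_\sigma$.
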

	\begin{proof}
		As $(A,\sigma)_K$ is weakly isotropic over $K$ we have $\sum_{i=1}^n\sigma(x_i)x_i=0$ for some nonzero $x_i\in A_K$. Hence $T_\sigma(\sum_{i=1}^n\sigma(x_i)x_i)=0$ which implies $(T_\sigma)_K$ is weakly isotropic. Since $K/F$ is totally positive this implies that $T_\sigma$ is weakly isotropic over $F$.
	\end{proof}

	\begin{remark}
		In general, if $T_\sigma$ is weakly isotropic over $F$ it need not imply that $(A,\sigma)$ is weakly isotropic over $F$. This converse is known if $F$ has the \emph{effective diagonalization property} ED, (defined by Ware  \cite{Ware}). If $F$ has the ED property then by a result of Lewis, Scheiderer and Unger \cite[Theorem 4.3]{LSU}, the converse is true. The following corollary is immediate from their theorem. 
	\end{remark}
	
	\begin{corollary}
		Let $F$ be a formally real field with ED property. Let $K/F$ be a totally positive field extension. Let $(A,\sigma)$ be a central simple algebra with an orthogonal involution over $F$ such that $(A,\sigma)_K$ is weakly isotropic over $K$. Then $(A,\sigma)$ is weakly isotropic over $F$.
	\end{corollary}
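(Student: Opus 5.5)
This corollary follows by chaining Lemma \ref{ED-property} with the theorem of Lewis, Scheiderer and Unger \cite[Theorem 4.3]{LSU} cited in the preceding remark. Start from the hypothesis that $(A,\sigma)_K$ is weakly isotropic. Choose nonzero $x_1,\ldots,x_n\in A_K$ with $\sum_{i=1}^n\sigma(x_i)x_i=0$. Lemma \ref{ED-property} then shows that the involution trace form $T_\sigma$ is weakly isotropic over $K$. Since $K/F$ is totally positive, Lemma \ref{totally positive criterion} gives that $T_\sigma$ is weakly isotropic over $F$.

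Now use that $F$ has the ED property. By \cite[Theorem 4.3]{LSU}, over such a field the following are equivalent:
\begin{enumerate}[(a)]
\item $(A,\sigma)$ is weakly isotropic;
\item $T_\sigma$ is weakly isotropic.
\end{enumerate}
By Prestel's theorem, (b) amounts to $T_\sigma$ being indefinite at every semiordering. Applying the implication (b)$\Rightarrow$(a) gives that $(A,\sigma)$ is weakly isotropic over $F$, as required.

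The only point needing care is the deduction inside Lemma \ref{ED-property} that $(T_\sigma)_K$ is weakly isotropic. Applying $T_\sigma$ to the sum $\sum_i\sigma(x_i)x_i$ is not literally meaningful, so I will instead argue as follows:
\[
0=Trd\Big(\sum_i\sigma(x_i)x_i\Big)=\sum_i T_\sigma(x_i),
\]
where each $x_i\neq 0$. Hence the vector $(x_1,\ldots,x_n)$ is a nonzero isotropic vector of $n\cdot (T_\sigma)_K$, and $(T_\sigma)_K$ is weakly isotropic.

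I expect the main obstacle to be checking that \cite[Theorem 4.3]{LSU} is stated precisely in this form, namely for orthogonal involutions and for weak isotropy rather than for hyperbolicity or signature conditions. If it is phrased through signatures, I would instead pass through Prestel's theorem and the semiorderings of $F$.
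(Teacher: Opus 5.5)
Your proposal is correct and is essentially the paper's proof. Lemma \ref{ED-property} gives weak isotropy of $T_\sigma$ over $F$, and \cite[Theorem 4.3]{LSU} under the ED hypothesis upgrades this to weak isotropy of $(A,\sigma)$. Your rewriting $\sum_i T_\sigma(x_i)=Trd_A\big(\sum_i\sigma(x_i)x_i\big)=0$ is the right reading of the loosely written step in that lemma, and the passage from $K$ to $F$ is Lemma \ref{totally positive criterion} applied to the form $n\cdot T_\sigma$.
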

	\begin{proof}
		By the above (Lemma \ref{ED-property}), $T_\sigma$ is weakly isotropic over $F$. As $F$ has ED property by \cite[Theorem 4.3]{LSU} $(A,\sigma)$ is weakly isotropic over $F$.
	\end{proof}
	
	\noindent{\bf Proof of Theorem \ref{weak hyp involution}}.
	Let $(A, \sigma)_K$ be weakly hyperbolic. By (Lemma \ref{weak hyperbolic signature}) above, ${\rm sign}_P\ \sigma_K=0$ for all $P \in X_K$. As $K/F$ is totally positive, for every $\widetilde{P}\in X_F$, there exists an ordering ${P} \in X_K$ extending $\widetilde{P}$. As $F_{\widetilde{P}} \subset K_{P}$ we have ${\rm sign}_{\widetilde{P}}\ \sigma= {\rm sign}_{P}\ \sigma_K=0$. Hence ${\rm sign}_{\widetilde{P}}\sigma=0$ for every $\widetilde{P}\in X_F$. This implies that $(A, \sigma)$ is weakly hyperbolic over $F$ by \cite[Theorem 3.2]{ul}.\qed
	\medskip
	
	Recall that $(A,\sigma)$ is said to be \emph{totally decomposable} if \[
	(A, \sigma)\simeq (Q_1,\sigma_1) \otimes_F \cdots \otimes_F (Q_r, \sigma_r)\] for some quaternion algebras $Q_i$ with involution of the first kind $\sigma_i,\ 1\leq i \leq r$. We now prove (Theorem \ref{anologue-for-involution}).
	\medskip
	
	\noindent{\bf Proof of Theorem \ref{anologue-for-involution}}.
	Let $(A,\sigma)$ be weakly isotropic over $K$. Let $T_\sigma$ be the involution trace form of $(A,\sigma)$ over $F$. Then $T_\sigma=T_{\sigma_1}\otimes\cdots\otimes T_{\sigma_r}$. As $T_{\sigma_i}$ is a $2$- fold Pfister form for $1\leq i\leq r$ (see \cite[Proposition 11.6]{boi}), we have $T_{\sigma}$ is similar to a Pfister form.
	
	As $(A,\sigma)_K$ is weakly isotropic over $K$ by (Lemma \ref{ED-property}) $(T_\sigma)_K$ is weakly isotropic.
	We will show that ${\rm sign}_P\ \sigma=0$ for all $P\in X_K$. Let $P\in X_K$. As $T_\sigma$ is weakly isotropic over $K$ it is isotropic over $K_P$. Since $T_\sigma$ is similar to a Pfister form, $T_\sigma$ is hyperbolic over $K_P$. Thus ${\rm sign}_P\ T_\sigma=0$ and hence ${\rm sign}_P\ \sigma=0$. So ${\rm sign}_P\ \sigma=0$ for every $P\in X_K$. Hence $(A,\sigma)_K$ is weakly hyperbolic over $K$, by \cite[Theorem 3.2]{ul}.
	
	
	As $K/F$ is totally positive and $(A,\sigma)_K$ is weakly hyperbolic over $K$ by the above (Theorem \ref{weak hyp involution}) $(A,\sigma)$ is weakly hyperbolic over $F$.
	\qed
	\vspace{.5cm}
	

	\section{Becher's conjecture}\label{Conjecture-cases}
	In this section, we prove some new cases when Becher's conjecture (Conjecture \ref{conjecture}) is true. Let $F$ be a formally real field. Let $A$ be a central simple algebra over $F$ of exponent $2$.
	\medskip
	\noindent{\bf Proof of Theorem \ref{proof-conjecture}}.
	We start with the proof of $(i)$. If $pind(A)=1$ then clearly $pind(A_K)=1$ and we are done in this case. Suppose now that $pind(A)=2$, i.e., $A_{F_{\rm py}}\sim H$ for a quaternion division algebra $H$ over $F_{\rm py}$. Let $q\in W(F_{\rm py})$ be the associated norm form of $H$. Let $P\in X_{F_{\rm py}}$ be an ordering such that $q\neq 0$ over $F_P$. As $P\cap F\in X_F$ and $K/F$ is a totally positive extension, there exists $\widetilde{P}\in X_K$ extending $P\cap F$. Clearly, $F_P\subset K_{\widetilde{P}}$. We will show that $pind(A_K)=2$. If $pind(A_K)=1$ then $q$ is hyperbolic over $K_{\rm py}$. Hence ${\rm sign}_{\widetilde{P}}(q)=0$. On the other hand, $q\neq 0$ over $F_P$, i.e., ${\rm sign}_P(q)\neq 0$. As ${\rm sign}_P(q)={\rm sign}_{\widetilde{P}}(q)$ this is a contradiction. So $pind(A_K)=2$. This proves $(i)$.
	\medskip
	
	We next prove $(ii)$. In view of $(i)$ above, we only consider the case when $pind(A)=4$. So $\Ind(A)=pind(A)=4$. Let $q_A\in W(F)$ be an Albert form associated to $A$. As $pind(A)=4$, $(q_A)_{F_{ \rm py}}$ is anisotropic (see \cite[Chapter 3, Theorem 4.8]{Lam}). In this case, $pind\ A_K \in \{1,2,4\}$. We will show that $pind\ A_K =4$. Suppose that $pind\ A_K < 4$. This implies  $(q_A)_{K_{\rm py}}$ is isotropic. As $K_{\rm py} /F$ is a totally positive extension, we have $q_A$ is weakly isotropic over $F$ (see Lemma \ref{totally positive criterion}). Hence $(q_A)_{F_{ \rm py}}$ is weakly isotropic. As $F_{\rm py}$ is pythagorean, $(q_A)_{F_{ \rm py}}$ is isotropic. This is a contradiction as $(q_A)_{F_{ \rm py}}$ is anisotropic. Hence $pind\ A_K=4$ and so $pind\ A= pind\ A_K$. This completes the proof of $(ii)$.
	\medskip
	
	We now prove $(iii)$. By $(i)$ above we only consider the case when $pind(A)=4$. As in the proof of $(ii)$ above, let $q\in W(F_{\rm py})$ be an Albert form associated to the underlying biquaternion division algebra of $A_{F_{\rm py}}$. If $pind(A_K)<4$, then $q_{K_{\rm py}}$ is isotropic over $K_{\rm py}$. 
	Now since $K/F$ is a totally positive Galois extension, $K_{\rm py}/F_{\rm py}$ is totally positive (see Theorem \ref{arbitrary Galois tp}). Therefore, $q$ is weakly isotropic over $F_{\rm py}$ and hence $q$ is isotropic over $F_{\rm py}$, i.e., $pind(A)<4$, a contradiction. This concludes the proof of $(iii)$.\qed		
	\medskip
	
	Let $A\in{}_2Br(F)$. If $A$ is locally split then by Marshall's result (\cite[Theorem 4, Corollary]{Marshall} and \cite[Theorem 3.4]{Becher1}), $A_{F_{\rm py}}=0$, i.e., $pind(A)=1$. Hence trivially Becher's conjecture (Conjecture \ref{conjecture}) holds for such $A$. We have the following related corollary.
	\medskip
	
	\begin{corollary}\label{nonsplit for every ordering}
		Let $K/F$ be a totally positive field extension and $A$ be a central simple $F$-algebra of exponent $2$. For every ordering $P$ of $F$, if $A\sim (-1,-1)\in Br(F_P)$ then Becher's conjecture (Conjecture \ref{conjecture}) holds.
	\end{corollary}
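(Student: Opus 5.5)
The approach is to reduce to the case $pind(A)\leq 2$, already settled in part $(i)$ of Theorem \ref{proof-conjecture}, using Marshall's result on locally split algebras quoted just before the corollary.

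Set $B = A\otimes_F \big(\frac{-1,-1}{F}\big)$. Since both factors have exponent at most $2$, $B\in {}_2Br(F)$. Fix an ordering $P\in X_F$. By hypothesis $A\sim(-1,-1)$ in $Br(F_P)$, so
\[
B_{F_P}\sim (-1,-1)\otimes(-1,-1)\sim 0 \in Br(F_P).
\]
Hence $B$ is locally split over $F$. By Marshall's theorem (\cite[Theorem 4, Corollary]{Marshall}, \cite[Theorem 3.4]{Becher1}) we get $B_{F_{\rm py}}=0$, that is,
\[
A_{F_{\rm py}}\sim \Big(\frac{-1,-1}{F_{\rm py}}\Big).
\]
Therefore $pind(A)\leq 2$, and part $(i)$ of Theorem \ref{proof-conjecture} gives $pind(A)=pind(A_K)$.

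One can also see both indices directly. The quaternion algebra $(-1,-1)$ over $F_{\rm py}$ has norm form $\langle 1,1,1,1\rangle$. This form is anisotropic because $F_{\rm py}$ is formally real: it is totally positive over the formally real field $F$ by Lemma \ref{py totally positive}, so orderings of $F$ extend to it. Hence $pind(A)=2$.

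Over $K$ the same argument applies. $B_K$ is split over $K_{\rm py}\supseteq F_{\rm py}$, so $A_{K_{\rm py}}\sim(-1,-1)$. Moreover $K_{\rm py}$ is formally real: $K/F$ is totally positive and $F$ is formally real, so $K$ is formally real, and $K_{\rm py}$ is too. Thus $pind(A_K)=2$ as well.

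The main point to check is that Marshall's result applies to $B$, i.e.\ that local splitness of $B$ follows from the hypothesis at every $P\in X_F$, which is the displayed computation above. A minor degenerate case is $X_F=\emptyset$, which is excluded because $F$ is formally real throughout this section.
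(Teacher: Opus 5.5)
Your proof is correct and follows the paper's argument exactly: you form $A\otimes_F(-1,-1)$, note that it is locally split, apply Marshall's theorem to get $A_{F_{\rm py}}\sim(-1,-1)$, and conclude via part $(i)$ of Theorem \ref{proof-conjecture}. Your extra remark that $K_{\rm py}$ is formally real, so that $A_{K_{\rm py}}\sim(-1,-1)$ is nonsplit, even gives $pind(A_K)=2=pind(A)$ directly without invoking part $(i)$.
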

	\begin{proof}
		Consider $A\otimes_F(-1,-1)\in Br(F)$. By hypothesis $A\otimes_F(-1,-1)=0 \in Br(F_P)$ for every ordering $P\in X_F$. Hence $A\otimes_F(-1,-1)=0\in Br(F_{\rm py})$ by Marshall's result (\cite[Theorem 4, Corollary]{Marshall} and \cite[Theorem 3.4]{Becher1}). Hence $A\sim (-1,-1)\in Br(F_{\rm py})$ i.e., $pind(A)=2$. The result now follows from (Theorem \ref{proof-conjecture}($i$)) above.
	\end{proof}

	\bigskip
	
	\noindent P. Mandal\\
	E-mail: {\ttfamily pbmandal@iitb.ac.in}\\
	Department of Mathematics, Indian Institute of Technology Bombay,\\ Powai, Mumbai-400076, India
	\vspace{.5cm}
	
	\noindent R. Preeti\\
	E-mail: {\ttfamily preeti@math.iitb.ac.in}\\
	Department of Mathematics, Indian Institute of Technology Bombay,\\
	Powai, Mumbai-400076, India
	\vspace{.5cm}
	
	\noindent A. Soman\\
	E-mail: {\ttfamily abhaysoman@uohyd.ac.in}\\
	School of Mathematics and Statistics, University of Hyderabad,\\
	Hyderabad-500046, India

\begin{thebibliography}{[XXX]}
		
		\bibitem[B]{Becher} K. J. Becher, {\it Totally positive extensions and weakly isotropic forms}, Manuscripta Math., $120$($1$): $83-90$, $2006$.
		
		\bibitem[B1]{Becher1} K. J. Becher, {\it Decomposability for division algebras of exponent two and associated forms}, Math. Z., $258$($3$): $691–709$, $2008$.
		
		\bibitem[BLS]{BLS} K. J. Becher, D. B. Leep and C. Schubert, {\it Semiorderings and stability index under field extensions}, Israel J. Math., $199$($2$): $547-566$, $2014$.
		
		\bibitem[D]{D} P. K. Draxl, {\it Skew fields}, volume $81$ of Landon Mathematical Society Lecture Note Series, Cambridge University Press, Cambridge, $1983$.
		
		
		\bibitem[KMRT]{boi}\medspace
		M. A. Knus, A. Merkurjev, M. Rost and J. P. Tignol, {\it The book of involutions}, volume $44$ of American Mathematical Society Colloquium Publications, American Mathematical Society, Providence, RI, $1998$. With a preface in French by J. Tits.
		
		\bibitem[L1]{Lam1} T. Y. Lam, {\it Orderings, valuations and quadratic forms}, volume $52$ of CBMS Regional Conference Series in Mathematics. Published for the Conference Board of the Mathematical Sciences, Washington, DC; by the American Mathematical Society, Providence, RI, $1983$.
		
		\bibitem[L2]{Lam} T. Y. Lam, {\it Introduction to quadratic forms over fields}, volume $67$ of Graduate Studies in Mathematics, American Mathematical Society, Providence, RI, $2005$.
		
		\bibitem[L]{Lang} S. Lang, {\it Algebra}, volume $211$ of Graduate Texts in Mathematics, Springer-Verlag, New York, third edition, $2002$.
		
		\bibitem[LSU]{LSU} D. W. Lewis, C. Scheiderer and T. Unger, {\it A weak Hasse principle for central simple algebras with an involution}, In {\it Proceedings of the Conference on Quadratic Forms and Related Topics (Baton Rouge, LA, 2001)}, number Extra Vol., $241-251$, $2001$.
		
		\bibitem[LU]{ul}\medspace
		D. W. Lewis and T. Unger, {\it A local-global principle for algebras with involution and Hermitian forms}, Math. Z., $244$($3$): $469–477$, $2003$.
		
		\bibitem[M]{Marshall} M. Marshall, {\it Some local-global principles for formally real fields}, Canadian J. Math., $29$($3$): $606–614$, $1977$.
		
		%
		\bibitem[P]{Prestel} A. Prestel, {\it Lectures on formally real fields}, volume $1093$ of Lecture Notes in Mathematics, Springer-Verlag, Berlin, $1984$.
		
		
		\bibitem[S1]{Sal} D. J. Saltman, {\it Lectures on division algebras}, volume $94$ of CBMS Regional Conference Series in Mathematics. Published by American Mathematical Society, Providence, RI; on behalf of Conference Board of the Mathematical Sciences, Washington, DC, $1999$.
		
		\bibitem[S]{Scha} W. Scharlau, {\it Quadratic and Hermitian forms},  volume $270$ of Grundlehren der Mathematischen Wissenschaften [Fundamental Principles of Mathematical Sciences], Springer-Verlag, Berlin, $1985$.
		
		\bibitem[W]{Ware} R. Ware, {\it Hasse principles and the {$u$}-invariant over formally real fields}, Nagoya Math. J., $61$: $117-125$, $1976$.
		
		
	\end{thebibliography}
\end{document}